\documentclass{amsart}
\subjclass{Primary:11G05 Secondary:11R37, 14H25}
\keywords{elliptic curve, Heggner points}
\usepackage{amssymb,amsmath,amsthm}
\setlength{\oddsidemargin}{0.25in}
\setlength{\textwidth}{6in}
\setlength{\topmargin}{-0.25in}
\setlength{\textheight}{8in}

\begin{document}

\newtheorem{thm}{Theorem}[section]
\newtheorem{cor}[thm]{Corollary}
\newtheorem{lem}[thm]{Lemma}
\newtheorem{prop}[thm]{Proposition}
\newtheorem{defn}[thm]{Definition}

\theoremstyle{definition}
\newtheorem{rmk}[thm]{Remark}

\newcommand{\End}{\operatorname{End}}
\newcommand{\Ker}{\operatorname{Ker}}
\newcommand{\Disc}{\operatorname{Disc}}
\newcommand{\rec}{\operatorname{rec}}
\newcommand{\Pic}{\operatorname{Pic}}
\newcommand{\Gal}{\operatorname{Gal}}
\newcommand{\Aut}{\operatorname{Aut}}
\newcommand{\CM}{\operatorname{CM}}
\newcommand{\sign}{\operatorname{sign}}

\title[On the independence of Heegner points\dots]{On the independence of Heegner points on CM elliptic curves associated to distinct quadratic imaginary fields}

\author{Hatice \c Sahino\u glu}
\date{\today}
\begin{abstract}In this paper we give a sufficient condition on the class numbers of distinct quadratic imaginary fields so that on a given CM elliptic curve over $\mathbb{Q}$ with fixed modular parametrisation, the Heegner points associated to (the maximal orders of) these quadratic imaginary fields are linearly independent. This extends results of Rosen and Silverman from the non-CM to the CM case.
\end{abstract}
\maketitle
\section{Introduction}
In this paper we give a result for CM elliptic curves similar to the one in the paper \cite{RS} by Michael Rosen and Joseph H. Silverman. In their paper, they have considered a non-CM elliptic curve $E/\mathbb{Q}$ with a given modular parameterization $\Phi_{E}:X_0(N)\rightarrow E$. Let $P_1,P_2, P_3,...P_r \in E(\bar{\mathbb {Q}})$ be Heegner points associated to the ring of integers of distinct quadratic imaginary fields $k_1,k_2, k_3,...k_r$. If the odd parts of the class numbers of $k_1,k_2, k_3,...k_r$ are larger than a constant $C=C(E,\Phi_{E})$ (depending only on $E$ and $\Phi_{E}$), then they prove that the points $P_1,P_2, P_3,...P_r$ are independent in $E(\bar{\mathbb{Q}})/E_{tors}$. In this  paper we show that the result holds if~$E$ has CM.
We follow the same procedure as in the paper \cite{RS} and we use  most of the results from there. This makes \cite {RS} our main reference, and the details of many steps will be skipped by referring to \cite{RS}. The main difficulty for the CM case is that Serre's theorem on the image of Galois does not  apply, which obliges us to find another technique to estimate a uniform bound for the prime-to-$2$ part of $[k_i(P_i):k_i(nP_i)]$, for all $n$ and $i$, where $1\leq i \leq r$.
Here is the exact statement of our main result, which includes both our CM case and the Rosen--Silverman non-CM case.
\begin{thm}\label{main2}
Let $E/\mathbb{Q}$ be an elliptic curve and let
$$\Phi_{E}:X_0(N)\rightarrow E$$ be a modular parametrization.
Assume that we are given the following:
\begin{itemize}
 \item $k_1,k_2, k_3,...k_r$ are distinct quadratic imaginary fields satisfying the Heegner condition for N,
 \item $h_1,h_2, h_3,...h_r$ are the class numbers of $k_1,k_2, k_3,...k_r$,
 \item $y_1,y_2, y_3,...y_r$ are points on the modular curve associated to the ring of integers of $k_1, k_2, k_3,...k_r$ respectively,
 \item $P_1,P_2, P_3,...P_r$ are the associated Heegner points $P_i=\Phi_{E}(y_i)$.
 \end{itemize}
There exists a constant $C=C(E,\Phi_E)$, which
depends only on $E$ and the modular parametrization $\Phi_E$, such that if the odd part of the class numbers of $k_1,k_2, k_3,...k_r$ are larger than $C$,
then the points $P_1,P_2, P_3,...P_r$ are independent in $E(\bar{\mathbb{Q}})/E_{tors}$
\end{thm}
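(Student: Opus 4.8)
The plan is to follow the Rosen--Silverman strategy, reducing the independence statement to a height/Galois-equivariance argument. Suppose for contradiction that the points $P_1,\dots,P_r$ satisfy a nontrivial relation $\sum a_i P_i \in E_{tors}$. After renumbering and clearing the torsion we may assume $a_1 \neq 0$. The key identity, as in \cite{RS}, comes from the fact that each Heegner point $P_i$ lies in $E(H_i)$ where $H_i$ is the Hilbert class field of $k_i$, and that the trace of $P_i$ from $H_i$ to $k_i$ is related, via the modular parametrization and Gross--Zagier type considerations, to a point whose height is controlled. Averaging a relation over $\Gal(H_1/k_1)$ and using that $\Gal(H_1/k_1)$ acts on the conjugates of $P_1$ through the class group, one extracts the conclusion that $h_1^{\mathrm{odd}}$ must divide (up to a bounded factor) the index $[k_1(P_1):k_1(nP_1)]$ for suitable $n$, forcing $h_1^{\mathrm{odd}} \le C$.

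Concretely, first I would record the field-of-definition facts: $P_i \in E(H_i)$, the Galois conjugates $P_i^\sigma$ for $\sigma \in \Gal(H_i/k_i) \cong \Pic(\mathcal{O}_{k_i})$ are again Heegner points of the same discriminant, and distinct $k_i$ give Hilbert class fields $H_i$ that are ``almost linearly disjoint'' over $\mathbb{Q}$ (their compositum has degree close to $\prod h_i$). Second, I would set up the linear-algebra/averaging step exactly as in \cite{RS}: a relation $\sum a_i P_i \in E_{tors}$, pushed through $\Gal(\overline{\mathbb Q}/\mathbb Q)$, yields many relations among the $P_i^\sigma$; counting dimensions against the near-disjointness of the $H_i$ shows that if all $h_i^{\mathrm{odd}}$ are large then no such relation can exist unless it is trivial. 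The arithmetic input that makes the counting work is a uniform bound, independent of $n$ and $i$, for the prime-to-$2$ part of $[k_i(P_i):k_i(nP_i)]$; this controls how much the $\mathbb{Z}$-module generated by the Galois orbit of $P_i$ can degenerate modulo $n$.

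The main obstacle, and the novel content relative to \cite{RS}, is precisely that last uniform bound on $[k_i(P_i):k_i(nP_i)]_{\mathrm{odd}}$ when $E$ has CM, since Serre's open-image theorem fails: the image of Galois in $\Aut(T_\ell E)$ is not open in $\mathrm{GL}_2(\mathbb{Z}_\ell)$ but rather (essentially) contained in a Cartan subgroup, so the generic bound of \cite{RS} is unavailable. I would instead exploit the CM structure directly: writing $\End(E) \otimes \mathbb{Q} = K$ (an imaginary quadratic field, possibly equal to some $k_i$), the torsion fields $\mathbb{Q}(E[n])$ are abelian over $K$ and governed by the arithmetic of ray class fields / the Hecke character attached to $E$, so $[k_i(P_i):k_i(nP_i)]$ can be estimated by comparing it with $[k_i(E[n]):k_i]$ via the Weil pairing and the fact that $nP_i$ determines $P_i$ up to an $E[n]$-coset. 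The prime-to-$2$ part of $[\mathbb{Q}(E[n]):\mathbb{Q}]$, and hence of the relevant index, is then bounded below by a power of $n$ with a uniformly bounded deficiency, using class field theory over $K$ together with the standard control of the conductor of the CM Hecke character; a mild case distinction is needed according to whether $k_i = K$ or $k_i \neq K$. Assembling these pieces gives the constant $C = C(E,\Phi_E)$ and completes the argument; the remaining steps (height positivity, the disjointness estimate, and the final linear algebra) are imported essentially verbatim from \cite{RS}.
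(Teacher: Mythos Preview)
Your broad outline---assume a dependence relation, use that $P_i \in E(H_i)$, invoke the near-disjointness of the Hilbert class fields from \cite{RS}, and reduce everything to a uniform bound on the odd part of $[k_i(P_i):k_i(nP_i)]$---matches the paper. But two things diverge from the actual argument, and the second is a real gap.

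First, the paper's proof involves no heights, no Gross--Zagier input, and no averaging over $\Gal(H_1/k_1)$. It is a straight field-containment argument: from $n_r P_r = -\sum_{i<r} n_i P_i \in E\bigl(\prod_{i<r} K_i\bigr)$ one gets $k_r(n_rP_r) \subset K_r \cap k_r\prod_{i<r} K_i$, and \cite[Proposition~18]{RS} forces the odd part of $[k_r(n_rP_r):k_r]$ to be $1$. Then $h_r = [K_r:k_r]$ divides $(\deg\Phi_E)!\cdot [k_r(P_r):k_r(n_rP_r)]\cdot [k_r(n_rP_r):k_r]$, so the only missing ingredient is the uniform bound on the odd part of $[k_r(P_r):k_r(n_rP_r)]$.

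Second, and more seriously, your proposed mechanism for that bound does not work as stated. You suggest comparing $[k_i(P_i):k_i(nP_i)]$ to $[k_i(E[n]):k_i]$ and then using that the CM torsion-field degrees are \emph{bounded below} by a power of $n$. But the needed inequality goes the other way: one must bound $\#E(k_i(P_i))[n]$ \emph{from above}, uniformly in $n$ and $i$, while $[k_i(P_i):k_i]$ itself has no uniform bound (it can be as large as $h_i$). Knowing that the full torsion field is large does not by itself prevent a cyclic piece of $E[n]$ from living inside an unramified abelian extension of unbounded degree. The paper's substitute for Serre's open-image theorem is a \emph{ramification} argument, and this is the idea you are missing: since $k_i(P_i)/k_i$ is unramified (it sits in the Hilbert class field), one passes to $F = k_i \cdot K_E \cdot f$ with $f$ a field of everywhere good reduction, and shows that any nontrivial $\sigma \in \Gal(F(P_i)/F(p^r P_i))$ lies in the inertia group at some prime above $p$. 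The CM structure is used to produce, from the nonzero torsion point $T_1 = \sigma(P_i)-P_i$, an $\mathcal{O}$-independent companion $T_2 = \lambda T_1 \in E(F(P_i))$; since $E[p^r]$ cannot inject into $E(\overline{\mathbb{F}}_p)$, one of $T_1,T_2$ reduces to $0$ at a prime above $p$, which pins $\sigma$ (or a conjugate) into inertia. Because the only possible ramification in $F(P_i)/F$ comes from the fixed finite extension $K_E f/\mathbb{Q}$, the inertia---and hence $[F(P_i):F(p^rP_i)]$---is bounded independently of $p$, $r$, and $i$. That is the step your Hecke-character sketch does not supply.
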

After presenting the proof of the Theorem \ref{main2}, we will generalize the result to Heegner points associated to  non-maximal orders of fixed conductor.
\section{Heegner Points}
We consider Heegner points on the modular curve $X_{0}(N)$ and their images, which also are  called Heegner points, under a parametrization of an elliptic curve. In this section we give some properties of Heegner points on modular curves and on elliptic curves. We do not give the proofs, but we give references for the results that we quote.

\subsection{Heegner points on a Modular Curve of Level N}
The points of the non-cuspidal part of the modular curve $X_{0}(N)$ are in one-to-one correspondence with isomorphism classes of triples $(A, A', \phi)$, where $A, A'$ are elliptic curves and $\phi$ is an isogeny from $A$ to $A'$ whose kernel is cyclic of order $N$.
Heegner points are points on a modular curve that are associated to orders $\mathcal{O}$ in quadratic imaginary fields $k$. They  will be the points corresponding to $(A, A', \phi)$, where
$$\End(A)\cong \End(A') \cong \mathcal{O},$$
and $\phi: A\rightarrow A'$ is an isogeny with $\ker(\phi)\cong \mathbb {Z}/N\mathbb{Z}$. We denote this set by $\CM^{(N)}(\mathcal{O})$. One should note that not all orders give non-trivial points on the modular curve $X_{0}(N)$. The interesting case occurs when the order $\mathcal{O}$ of the ring of integers of $k$ satisfies the Heegner condition for $N$. That is:

\begin{itemize}
\item[(1)] $\gcd(\Disc (\mathcal{O}), N)=1$.
\item[(2)]Every prime dividing $N$ is split in $k$.
\end{itemize}

The following theorem gives some important properties of Heegner points. Before stating it, we introduce ring class fields associated to orders in quadratic imaginary fields. Note that until we start generalizing our results to arbitrary orders, when we mention Heegner points, we will be talking about Heegner points associated to maximal orders of quadratic imaginary fields.

\begin{defn} Let $k$ be a quadratic imaginary field and $\mathcal{O}$ an order in $k$.  The field extension $K_{\mathcal{O}}$ is the field such that
the reciprocity map gives an isomorphism
$$ \rec: \Pic{\mathcal{O}} \cong \Gal(K_{\mathcal{O}}/k).$$
This field is called the \emph{ring class field associated to $\mathcal{O}$}.
\end{defn}

\begin{thm}
Let $\mathcal{O}$ be an order in $k$ that satisfies the Heegner condition, and let $K_{\mathcal{O}}$ be the ring class field of $k$ associated to $\mathcal{O}$. Then we have the following:
\item[(a)]The points in $\CM^{(N)}(\mathcal{O})$ are defined over $K_{\mathcal{O}}$.
i.e.,
$$\CM^{(N)}(\mathcal{O})\subset X_{0}(N)(K_{\mathcal{O}}). $$
\item[(b)]The set of triplets $(A, A', \phi)$ are in one-to-one correspondence with the set of pairs
$$\{(\mathfrak{n},\bar{a}): \bar{a}\in \Pic(\mathcal{O}),\; \mathfrak{n} \text{ is a proper $\mathcal{O}$-ideal, and }  \mathcal{O}/\mathfrak{n}\cong\ \mathbb {Z}/N \mathbb{Z}\}.$$
The correspondence is given by assigning the pair $(\mathfrak{n},\bar{a})$ to the isogeny
$$ \mathbb{C}/a \rightarrow \mathbb{C}/a\mathfrak{n}^{-1}.$$
\item[(c)]There is a natural action of $\Pic(\mathcal{O})$ on isomorphism classes of triplets (and so on $\CM^{(N)}(0)$) which we denote by $\star$. The action is defined as follows on the pairs $(\mathfrak{n}, \bar{a})$,
$$ \bar{b} \star (\mathfrak{n}, \bar{a})=(\mathfrak{n}, \bar{a}\bar{b}).$$
\item[(d)]Recalling the reciprocity map that gives an isomorphism between $\Pic(O)$ and $\Gal(K_{\mathcal{O}}/k)$, the $\star$-action is compatible with the action of Galois via the reciprocity map in the sense that
$$y^{(\bar{b},K_{\mathcal{O}}/k)}=\bar{b}^{-1}\star y.$$
\end{thm}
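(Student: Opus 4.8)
The plan is to establish the four assertions in the order (b), then (a) and (c), and finally (d), since the explicit ideal-theoretic dictionary in (b) is the scaffolding on which the remaining parts rest. All of this is classical complex-multiplication theory, so at the deepest points I would invoke the two main theorems of CM (in Shimura's and Silverman's formulations) rather than reprove them; what follows is the architecture of the argument.

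For (b) I would work analytically over $\mathbb{C}$. Since $\End(A)\cong\mathcal{O}$, one may write $A\cong\mathbb{C}/a$ for a proper fractional $\mathcal{O}$-ideal $a$, whose homothety class is precisely $\bar a\in\Pic(\mathcal{O})$; this produces the second coordinate. A cyclic $N$-isogeny $\phi\colon A\to A'$ is then the quotient by a cyclic subgroup $C\subset A$ of order $N$; lifting $C$ to a lattice $b\supseteq a$ with $b/a\cong C$, I set $\mathfrak{n}=\{x\in k:\ xb\subseteq a\}$, so that $b=a\mathfrak{n}^{-1}$ and $A'\cong\mathbb{C}/a\mathfrak{n}^{-1}$. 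The Heegner condition enters exactly here: because $\gcd(\Disc(\mathcal{O}),N)=1$ and every prime dividing $N$ splits in $k$, the ideal $N\mathcal{O}$ factors with cyclic quotient, so $\mathfrak{n}$ is a proper (invertible) $\mathcal{O}$-ideal with $\mathcal{O}/\mathfrak{n}\cong\mathbb{Z}/N\mathbb{Z}$, and conversely every such $\mathfrak{n}$ arises this way. I would then check that $(\mathfrak{n},\bar a)\mapsto(\mathbb{C}/a\to\mathbb{C}/a\mathfrak{n}^{-1})$ and the recipe above are mutually inverse on isomorphism classes; the fact that $a$ and $a\mathfrak{n}^{-1}$ are both proper $\mathcal{O}$-ideals forces $\End(A)\cong\End(A')\cong\mathcal{O}$, which closes the bijection.

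Part (a) I would then read off from the first main theorem of complex multiplication, which asserts that $j(a)$ generates $K_{\mathcal{O}}$ over $k$ and that $j(A),\,j(A')=j(a\mathfrak{n}^{-1})\in K_{\mathcal{O}}$. What remains is the rationality of the level structure, i.e.\ that the cyclic subgroup $C=\Ker(\phi)$---equivalently the point of $X_0(N)$ itself---descends to $K_{\mathcal{O}}$; here the splitting hypothesis is used a second time, guaranteeing that $C\cong\mathcal{O}/\mathfrak{n}$ is stable under $\Gal(\bar{k}/K_{\mathcal{O}})$, so the whole triple is defined over $K_{\mathcal{O}}$ (I would cite Shimura for the precise descent). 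Part (c) is then a transport of structure: the evident action $\bar b\star(\mathfrak{n},\bar a)=(\mathfrak{n},\bar a\bar b)$ on the right-hand set of (b) carries across the bijection to an action on triples, and the group axioms are immediate from those in $\Pic(\mathcal{O})$; since $\star$ fixes $\mathfrak{n}$ it preserves the cyclic data and hence maps $\CM^{(N)}(\mathcal{O})$ to itself.

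The hard part is (d), the compatibility of $\star$ with Galois, which is exactly the second main theorem of complex multiplication in the form of Shimura's reciprocity law. Concretely, for a CM point $y$ corresponding to $(\mathfrak{n},\bar a)$ I would compute the effect of the Artin symbol $\sigma=\rec(\bar b)=(\bar b,K_{\mathcal{O}}/k)$ and show it sends $y$ to the point corresponding to $(\mathfrak{n},\bar a\bar b^{-1})$, i.e.\ $y^{\sigma}=\bar b^{-1}\star y$. I expect this to be the main obstacle: the delicate issues are the normalization of the reciprocity map and the bookkeeping that produces the inverse on the $\star$-action, together with verifying that the idelic main theorem of CM specializes correctly to the ideal class $\bar b$ under the Heegner condition. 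I would reduce to the case of a prime ideal coprime to $N\Disc(\mathcal{O})$, apply the main theorem of CM to the reduction of the corresponding curve, and combine this with the compatibilities already recorded in (a)--(c); for the underlying reciprocity statement I would refer to Silverman's \emph{Advanced Topics in the Arithmetic of Elliptic Curves}.
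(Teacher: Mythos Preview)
Your outline is correct and is precisely the classical complex-multiplication argument (analytic parametrization by proper $\mathcal{O}$-ideals, first main theorem of CM for rationality, Shimura reciprocity for the Galois compatibility). The paper itself does not supply an independent proof of this theorem: it simply refers the reader to Darmon's \emph{Rational Points on Modular Elliptic Curves}, Chapter~3, where exactly the argument you sketch is carried out. So there is no divergence in approach---you have unpacked the cited reference rather than taken a different route.
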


\begin{proof}See \cite[chapter 3]{Darmon}.
\end{proof}
This theorem implies the following important result.
\begin{cor}\label{corollary1}
Let $\mathcal{O}$ be an order in $k$ that satisfies the Heegner condition, and let $y\in \CM^{(N)}(\mathcal{O})$. Then
$$k(y)=K_{\mathcal{O}}.$$
\end{cor}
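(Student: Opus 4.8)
The plan is to prove the two inclusions $k(y)\subseteq K_{\mathcal{O}}$ and $K_{\mathcal{O}}\subseteq k(y)$ separately. The first is immediate from part (a) of the preceding theorem: since $\CM^{(N)}(\mathcal{O})\subseteq X_0(N)(K_{\mathcal{O}})$, the point $y$ has coordinates in $K_{\mathcal{O}}$, so $k(y)\subseteq K_{\mathcal{O}}$.

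For the reverse inclusion I would argue by Galois theory. Because $\rec\colon \Pic(\mathcal{O})\xrightarrow{\sim}\Gal(K_{\mathcal{O}}/k)$ is an isomorphism, $K_{\mathcal{O}}/k$ is abelian, in particular Galois, and from $k\subseteq k(y)\subseteq K_{\mathcal{O}}$ we get $k(y)=K_{\mathcal{O}}^{H}$ with $H=\Gal(K_{\mathcal{O}}/k(y))$. Hence it suffices to show that $H$ is trivial, i.e. that the identity is the only element of $\Gal(K_{\mathcal{O}}/k)$ fixing $y$. Write such an element as $\sigma=\rec(\bar b)$ for some $\bar b\in\Pic(\mathcal{O})$. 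By part (d) we have $y^{\sigma}=\bar b^{-1}\star y$, so $\sigma\in H$ precisely when $\bar b^{-1}\star y=y$.

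Now I would feed this through the bijection of part (b): the point $y$ corresponds to a pair $(\mathfrak n,\bar a)$ with $\bar a\in\Pic(\mathcal{O})$. By the formula in part (c), $\bar b^{-1}\star(\mathfrak n,\bar a)=(\mathfrak n,\bar a\bar b^{-1})$, and since the correspondence in (b) is a bijection, $\bar b^{-1}\star y=y$ forces $\bar a\bar b^{-1}=\bar a$ in $\Pic(\mathcal{O})$, hence $\bar b=\bar 1$ and $\sigma=\mathrm{id}$. Therefore $H$ is trivial and $k(y)=K_{\mathcal{O}}$.

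There is no serious obstacle here: the only real content is the observation that the $\star$-action touches only the $\Pic(\mathcal{O})$-coordinate of $(\mathfrak n,\bar a)$ and acts there by translation, so it is free and every stabilizer in $\Gal(K_{\mathcal{O}}/k)$ is trivial. The points to be careful about are purely bookkeeping: using that (b) is a genuine bijection (so that equality of CM points is equivalent to equality of both coordinates of the associated pairs), and tracking the inverse in the compatibility statement (d) correctly.
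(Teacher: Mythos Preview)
Your argument is correct: the inclusion $k(y)\subseteq K_{\mathcal{O}}$ comes from part~(a), and the freeness of the $\star$-action on the $\Pic(\mathcal{O})$-coordinate together with the compatibility in~(d) forces the stabilizer of $y$ in $\Gal(K_{\mathcal{O}}/k)$ to be trivial, giving the reverse inclusion. The paper does not write out its own proof here but simply refers to \cite[Corollary~5]{RS}; your argument is exactly the standard one and is presumably what appears there.
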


\begin{proof}
See \cite[Corollary 5]{RS}.
\end{proof}

\subsection{Heegner points on elliptic curves}
The theorem of Wiles et al.\ on modularity of elliptic curves \cite{Wi95} says that for any elliptic curve $E/\mathbb{Q}$ there exists a modular parametrization
$$\Phi_{E} : X_{0}(N)\rightarrow E.$$
$\Phi_{E} $ is a finite covering map defined over $\mathbb{Q}$.
Heegner points on the elliptic curve are defined to be images under $\Phi_E$ of Heegner points on the modular curve. Therefore the set of Heegner points of $E$ associated to an order $\mathcal{O}$ is the set
$$\{ \Phi_{E}(y):y\in \CM^{(N)}(\mathcal{O})\}. $$
Let $P_{y}=\Phi_{E}(y)$. The $P_y$'s have coordinates in $K_{\mathcal{\mathcal{O}}}$, and one can investigate $[k(P_{y}):k]$ using the following proposition.
\begin{prop}\label{proposition1}
Let $\Psi :S\rightarrow T$ be a finite map of algebraic curves defined over a number field F, and let $s\in S$ and $t=\Psi (s)$. Then
$[F(s):F]$ divides $[F(t):F]\cdot(\deg\Psi)!$. In particular, when $\mathcal{O}$ is an order in $k$ that satisfies the Heegner condition for N, if we let $y\in \CM^{(N)}(\mathcal{O}) $ and $P_{y}=\Phi_{E}(y)$, then
$[K_{\mathcal{O}}:k]$ divides $[k(P_y):k]\cdot(\deg \Phi_E)!$.
\end{prop}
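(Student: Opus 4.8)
The plan is to reduce the divisibility statement to an orbit–counting argument for the action of Galois on the fiber $\Psi^{-1}(t)$. First I would note that since $\Psi$ is defined over $F$, the coordinates of $t=\Psi(s)$ are $F$-rational functions of the coordinates of $s$; hence $F(t)\subseteq F(s)$ and therefore $[F(s):F]=[F(s):F(t)]\cdot[F(t):F]$. It thus suffices to prove that $[F(s):F(t)]$ divides $(\deg\Psi)!$.

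Next I would analyze the fiber over $t$. Set $n=\deg\Psi$. Since $F$ has characteristic $0$ the map $\Psi$ is separable, so the scheme-theoretic fiber $\Psi^{-1}(t)$ — which is defined over $F(t)$, because $t$ is $F(t)$-rational and $\Psi$ is defined over $F\subseteq F(t)$ — consists of $m\le n$ distinct geometric points. The group $\Gal(\bar F/F(t))$ permutes these $m$ points, giving a homomorphism $\Gal(\bar F/F(t))\to\mathrm{Sym}(\Psi^{-1}(t))\cong S_m$; let $G$ denote its image, a finite group of order dividing $m!$, hence dividing $n!$. Now $s\in\Psi^{-1}(t)$, and the $\Gal(\bar F/F(t))$-orbit of $s$ — which is the same as its $G$-orbit — has size exactly $[F(t)(s):F(t)]=[F(s):F(t)]$ (using $F(t)\subseteq F(s)$). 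By the orbit–stabilizer theorem this size divides $|G|$, which divides $n!$. Combining with the first paragraph gives $[F(s):F]\mid[F(t):F]\cdot(\deg\Psi)!$.

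Finally, for the arithmetic statement I would specialize to $S=X_0(N)$, $T=E$, $\Psi=\Phi_E$, $F=k$, with $s=y\in\CM^{(N)}(\mathcal{O})$ and $t=P_y=\Phi_E(y)$, obtaining that $[k(y):k]$ divides $[k(P_y):k]\cdot(\deg\Phi_E)!$. Then Corollary~\ref{corollary1} identifies $k(y)$ with $K_{\mathcal{O}}$, yielding the claim. I do not expect a genuine obstacle here: the argument is routine, and the only point demanding a little care is the direction of the inclusion $F(t)\subseteq F(s)$, which is precisely where one uses that $\Psi$ is defined over $F$ rather than merely over $\bar F$ — without that hypothesis the conclusion can fail.
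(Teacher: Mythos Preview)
Your argument is correct and complete: the tower reduction to bounding $[F(s):F(t)]$, followed by the Galois action on the fiber $\Psi^{-1}(t)$ and orbit--stabilizer, is the standard way to prove this. The paper does not actually supply its own proof of this proposition --- it simply cites \cite[Proposition~6]{RS} --- so there is nothing substantive to compare; your write-up is in fact more informative than what appears here, and your closing remark about needing $\Psi$ to be defined over $F$ (not merely $\bar F$) is a nice point of precision.
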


\begin{proof}
See \cite[Proposition 6]{RS}
\end{proof}

\subsection{Multiples of points and abelian extensions}
We begin this section with the following lemma.
\begin{lem} \label{lemma1}
Let $E$ be an elliptic curve defined over a number field $k$, and let $P$ be a point on the elliptic curve such that    $k(P)/k(nP)$ is a Galois extension.  Then
\begin{equation}\label{divisiblex}
[k(P):k(nP)]  \leq \| \{T: T\text { is an $n$-torsion point in $E(k(P))$}\}\|,
\end{equation}
and hence
\begin{equation}\label{divisible}
[k(P):k(nP)]  \text{ divides }\| \{T: T\text { is an $n$-torsion point in $E(k(P))$}\}\|  !\, .
\end{equation}
\end{lem}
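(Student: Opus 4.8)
The plan is to exhibit an injection from the set of cosets $\Gal(k(P)/k(nP))$ into the set of $n$-torsion points lying in $E(k(P))$, which immediately gives the bound \eqref{divisiblex}; the divisibility statement \eqref{divisible} then follows because the order of the group $\Gal(k(P)/k(nP))$ divides the factorial of the cardinality of any set into which the group injects — indeed a group of order $m$ embeds in the symmetric group $S_m$, and more simply $m \mid m!$ once we know $m$ is at most that cardinality. So the whole content is in producing the injection.

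First I would fix the ambient picture: since $nP \in E(k(nP))$ and $k(P)/k(nP)$ is Galois by hypothesis, set $G = \Gal(k(P)/k(nP))$. For each $\sigma \in G$ consider the point $T_\sigma := \sigma(P) - P \in E(k(P))$. The key observations are: (i) $n T_\sigma = \sigma(nP) - nP = 0$ because $nP$ has coordinates in the fixed field $k(nP)$, so $T_\sigma$ is an $n$-torsion point in $E(k(P))$; (ii) the assignment $\sigma \mapsto T_\sigma$ is a cocycle-type map, and in particular if $T_\sigma = T_\tau$ then $\sigma(P) = \tau(P)$, hence $\tau^{-1}\sigma$ fixes $P$ and therefore fixes all of $k(P)$, so $\sigma = \tau$. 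This shows $\sigma \mapsto T_\sigma$ is injective, so $|G| = [k(P):k(nP)]$ is at most the number of $n$-torsion points in $E(k(P))$, which is \eqref{divisiblex}.

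For \eqref{divisible}, I would note that the map $\sigma \mapsto T_\sigma$ realizes $G$ as a subset (of size $[k(P):k(nP)]$) of the finite set $E[n] \cap E(k(P))$; since $G$ is a group of that order, and a finite group of order $m$ divides $m!$ (e.g.\ via Cayley's embedding into $S_m$, or simply because $m \mid m!$), we conclude $[k(P):k(nP)]$ divides $\bigl(\#\{T : T \text{ is an } n\text{-torsion point in } E(k(P))\}\bigr)!$, as claimed.

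The main obstacle, such as it is, is being careful about why the Galois hypothesis is needed and used: it guarantees that $k(P)/k(nP)$ has exactly $[k(P):k(nP)]$ automorphisms, so that counting automorphisms actually computes the degree, and it is what lets us run the injectivity argument inside $G$ rather than merely among embeddings. One should also double-check the edge case where $nP$ is the origin (then $k(nP) = k$ and the statement degenerates gracefully) and confirm that no hypothesis on $E$ having good reduction or on $n$ being prime is secretly needed — it is not. Everything else is the routine verification that $\sigma \mapsto \sigma(P) - P$ is well defined and injective on $G$.
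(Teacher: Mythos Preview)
Your proof is correct and follows essentially the same route as the paper: define the cocycle $\sigma \mapsto \sigma(P)-P$ from $\Gal(k(P)/k(nP))$ into $E(k(P))[n]$, check it is well defined and injective (using that $nP$ is fixed by $G$ and that $k(P)$ is generated by the coordinates of $P$), and read off the inequality and hence the divisibility. The paper's argument is the same, with the only cosmetic difference being that it concludes $\sigma=\tau$ directly from $\sigma(P)=\tau(P)$ rather than passing through $\tau^{-1}\sigma$.
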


\begin{proof}
We will show that $\Gal({k(P)}/k(nP))$ inject into $E(k(P))[n]$, which will imply that the order of the extension $[k(P):k(nP)]$ is less than the number of $n$-torsion points in $E(k(P))$. Define a map
$$\kappa  : \Gal(k(P)/k(nP)) \rightarrow E[n],$$ where for any element  $\sigma \in \Gal({k(P)}/k(nP))$,
$$\kappa(\sigma)=\sigma(P)-P.$$
Note that $\kappa(\sigma)$ is an $n$-torsion point, since $n (\sigma(P)-P)=\sigma(nP)-nP=nP-nP=0$, and note that $\kappa(\sigma)$ is in $E(k(P))$, since both $\sigma(P)$ and $P$ are there. To show the injectivity of the map, let $\sigma$, $\tau\in\Gal({k(P)}/k(nP))$ and assume that $\kappa(\sigma)=\kappa(\tau)$. This means that
\begin{align*}
\sigma(P)-P=\tau(P)-P \quad\Longrightarrow\quad
\sigma(P)=\tau(P)\quad\Longrightarrow\quad
\sigma=\tau.
\end{align*}
This implies that $[k(P):k(nP)]\leq E(k(P))[n]$, which gives~\eqref{divisiblex}, and then the divisibility property~\eqref{divisible} is immediate. Note that $\kappa$ is not a homomorphism; it is a $1$-cocycle.

\end{proof}

The following is the (easy) CM version of Serre's deep theorem on the
image of Galois. It is a well-known consequence of the theory of~CM.

\begin{thm}\label{serre}
Let $k$ be a number field, and $E/k$ be an elliptic curve with complex multiplication by an order $\mathcal{O}$ of a quadratic imaginary field $K_E$.  For a prime number $p$, we have a representation
$$\rho: \Gal(\bar{k}/k)\longrightarrow \Aut_{\mathcal{O}}E[p]= \left( \mathcal{O}/p\mathcal{O} \right)^{*}$$
defined by the formula
$$\rho(\sigma)\star T=T^{\sigma}\quad\text{for  $\sigma \in \Gal(\bar{k}/k)$.}$$
For all sufficiently large primes~$p$, the representation~$\rho$ is
surjective.
\end{thm}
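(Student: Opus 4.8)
The plan is to derive the surjectivity directly from the description of the Galois action on $p$-power torsion supplied by the main theorem of complex multiplication, by way of a purely local computation at the primes above~$p$; in particular, unlike Serre's theorem, no $\ell$-adic image machinery, and not even the Weil pairing, will be needed.

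First, some bookkeeping. The formula $\rho(\sigma)\star T=T^{\sigma}$ can take values in $(\mathcal O/p\mathcal O)^{*}$ only when $\Gal(\bar k/k)$ acts $\mathcal O$-linearly on $E[p]$, i.e.\ when every element of $\mathcal O=\End(E)$ is defined over~$k$; thus the statement tacitly requires $K_{E}\subseteq k$, which I assume. For every prime $p$ prime to the conductor $\mathfrak f$ of~$\mathcal O$ one has $\mathcal O/p\mathcal O\cong \mathcal O_{K_{E}}/p\mathcal O_{K_{E}}$, the module $E[p]$ is free of rank one over this ring, $\Aut_{\mathcal O}E[p]=(\mathcal O/p\mathcal O)^{*}$, and $\rho$ is a well-defined homomorphism. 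It therefore remains to prove that $\rho$ is onto for all large~$p$.

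Now I invoke the theory of CM (Shimura; or Silverman's \emph{Advanced Topics in the Arithmetic of Elliptic Curves}, Ch.~II): there is an algebraic Hecke character $\psi$ of~$k$, unramified outside the conductor of $E/k$, with values in $K_{E}$ and infinity type $N_{k/K_{E}}$, whose associated $p$-adic character $\widehat\psi_{p}$ describes the action of $\Gal(\bar k/k)$ on $T_{p}E$; in particular $k(E[p])/k$ is abelian, so, viewed via class field theory as a character of the idele class group of~$k$, the representation $\rho$ is $\widehat\psi_{p}$ reduced modulo~$p$. By construction $\widehat\psi_{p}$ agrees with $\psi$ on ideles prime to~$p$ and, on ideles supported at~$p$, is (up to an irrelevant inversion) the infinity type $N_{k/K_{E}}$. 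Fix $p$ unramified in $K_{E}$ and prime to~$\mathfrak f$ and to the conductor of $E/k$; this excludes only finitely many~$p$. For a prime $\mathfrak p\mid p$ of~$k$, lying over the prime $\mathfrak P$ of~$K_{E}$, a unit idele $x$ supported at~$\mathfrak p$ has $\widehat\psi_{p}(x)=N_{k_{\mathfrak p}/(K_{E})_{\mathfrak P}}(x_{\mathfrak p})$ in the $\mathfrak P$-component and $1$ in the remaining component(s) above~$p$; its reduction mod~$p$ is the residue-field norm $N_{\mathbb F_{\mathfrak p}/\mathbb F_{\mathfrak P}}(\bar x_{\mathfrak p})$, which exhausts $\mathbb F_{\mathfrak P}^{*}$ as $x_{\mathfrak p}$ ranges over $\mathcal O_{k_{\mathfrak p}}^{*}$ because norm maps of finite fields are surjective. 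Combining the contributions from the one or two primes $\mathfrak P$ of~$K_{E}$ over~$p$, the image of~$\rho$ contains $\prod_{\mathfrak P\mid p}\mathbb F_{\mathfrak P}^{*}=(\mathcal O_{K_{E}}/p)^{*}=(\mathcal O/p\mathcal O)^{*}$, hence $\rho$ is surjective.

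The one substantive point — what keeps this from being a one-line remark — is the CM input: one must correctly identify the torsion representation with the $p$-adic avatar of~$\psi$ and, above all, pin down the restriction of $\widehat\psi_{p}$ to the group of units at~$p$, where the infinity type $N_{k/K_{E}}$ surfaces. I expect that identification to be the main obstacle in a careful write-up; the surjectivity of finite-field norms then finishes the argument, uniformly in whether or not $k$ is strictly larger than~$K_{E}$.
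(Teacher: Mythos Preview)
The paper does not actually prove this theorem: it introduces it as ``the (easy) CM version of Serre's deep theorem'' and states that it ``is a well-known consequence of the theory of~CM,'' then moves on without argument. There is thus no proof in the paper to compare yours against.

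That said, your proposal is a correct fleshing-out of precisely the CM-theoretic argument the paper is gesturing toward. You identify $\rho$ (mod~$p$) with the reduction of the $p$-adic avatar $\widehat\psi_p$ of the Hecke character attached to $E/k$, observe that on local units at primes above~$p$ this avatar is governed by the infinity type $N_{k/K_E}$, and conclude via the surjectivity of norm maps of finite fields that the image already fills $(\mathcal O/p\mathcal O)^{*}$. Your bookkeeping is right --- the tacit hypothesis $K_E\subseteq k$ is indeed needed for $\rho$ to land in $(\mathcal O/p\mathcal O)^{*}$, and excluding the finitely many~$p$ dividing the conductor of~$\mathcal O$, the conductor of $E/k$, or ramified in~$K_E$ is exactly what ``sufficiently large'' must absorb. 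You also correctly flag the one genuinely nontrivial ingredient: pinning down $\widehat\psi_p$ on $\mathcal O_{k_{\mathfrak p}}^{*}$ in terms of the local norm. With that identification granted (as in Silverman, \emph{Advanced Topics}, Ch.~II, or Shimura), the rest is routine.
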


The next proposition is a key step in the proof of our main theorem.

\begin{prop} \label{ramification}
Suppose given an elliptic curve $E/l$ with CM by $K_E$, where $l$ is defined to be the composition of a quadratic imaginary field $k$ with $K_E$, $p$ a rational prime  such that $p$ is relatively prime to the primes of bad reduction of $E$, and $P$ a point on an elliptic curve $E$ with the property that $l(P)/l(pP)$ is a Galois extension and abelian. If $p$ is sufficiently large and $l(P)/l(pP)$ is non-trivial, then the field extension $l(P)/l$ is ramified at a prime $\wp$ above $p$ in $l(P)$.
\end{prop}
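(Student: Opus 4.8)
The plan is to argue by contradiction: suppose $l(P)/l$ is unramified at every prime above $p$, and derive a contradiction with the hypothesis that $l(P)/l(pP)$ is nontrivial for $p$ large. First I would reduce the problem to the subextension $l(P)/l(pP)$. Since $\kappa(\sigma) = \sigma(P) - P$ embeds $\Gal(l(P)/l(pP))$ into $E(l(P))[p]$ (Lemma \ref{lemma1}), nontriviality of $l(P)/l(pP)$ means $E(l(P))$ contains a nonzero $p$-torsion point $T$; after possibly enlarging, we may assume $T$ has exact order $p$, so $l(T) \subseteq l(P)$ is a nontrivial subextension of $l$ contained in $l(P)$. It suffices to show $l(T)/l$ is ramified at a prime above $p$.

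Next I would bring in the CM Galois representation of Theorem \ref{serre}. The extension $l(E[p])/l$ has Galois group inside $(\mathcal{O}/p\mathcal{O})^{*}$, and for $p$ sufficiently large this representation is surjective; note $\mathcal{O}/p\mathcal{O}$ is either $\mathbb{F}_{p^2}$ or $\mathbb{F}_p \times \mathbb{F}_p$ according to whether $p$ is inert or split in $K_E$. The subgroup of $(\mathcal{O}/p\mathcal{O})^{*}$ fixing a line through $T$ is a Borel-type subgroup, and its fixed field is exactly $l(T)$; surjectivity lets me compute $[l(T):l]$ precisely and, crucially, show it is divisible by a large power of $p$ (essentially the full $p$-part, of size on the order of $p$). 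Here is where I would invoke the hypothesis that $p$ is prime to the bad primes of $E$: away from bad reduction and away from $p$ itself, the extension $l(E[p])/l$ is unramified (criterion of N\'eron--Ogg--Shafarevich), hence so is $l(T)/l$ at every prime not above $p$.

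Now I combine these. If $l(T)/l$ were also unramified at all primes above $p$, it would be an everywhere-unramified abelian extension of $l$, so it would be contained in the Hilbert class field of $l$ and $[l(T):l]$ would divide $h_l$, the class number of $l$. But $[l(T):l]$ is divisible by a power of $p$ that grows with $p$, while $h_l$ is a fixed constant; for $p$ sufficiently large this is impossible. (More carefully, one only needs that $l(T)/l$ is unramified outside $p$ together with tame/wild ramification bounds at $p$: an abelian extension of $l$ ramified only at primes above $p$ with conductor exponent bounded in terms of $[l:\mathbb{Q}]$ and the ramification of $p$ has degree bounded independently of $p$ once $p$ is large, by class field theory — the ray class group modulo such a bounded conductor has order $O(1)$ in the relevant sense. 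The cleanest route is: the inertia image at $\wp \mid p$ must surject onto the $p$-part, but an unramified-outside-$p$ abelian extension with the inertia at $p$ trivial is unramified everywhere, forcing degree dividing $h_l$.) This contradiction shows $l(P)/l$ must ramify at some $\wp$ above $p$.

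The main obstacle I anticipate is the bookkeeping at $p$ itself: one must be careful that "unramified outside $p$" plus "abelian" does not already force small degree, so the argument genuinely needs the ramification (or the precise $p$-divisibility of $[l(T):l]$ coming from surjectivity of $\rho$) to land the contradiction against the fixed class number $h_l$. Making the dependence of the threshold for $p$ explicit — it should depend only on $h_l$, $[l:\mathbb{Q}]$, and the finitely many bad primes, i.e. only on $E$ and $k$ — is the delicate point, but it follows from the finiteness of the Hilbert class field together with Theorem \ref{serre}.
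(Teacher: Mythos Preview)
Your approach is essentially correct but takes a genuinely different route from the paper's. You argue by contradiction via class field theory: a nonzero $T\in E(l(P))[p]$ gives $l(T)\subseteq l(P)$, and if $l(P)/l$ were unramified above $p$ then $l(T)/l$ would be an abelian extension of $l$ unramified outside a fixed finite set, so $[l(T):l]$ would be bounded by a fixed (ray) class number of $l$; surjectivity of $\rho$ then forces $[l(T):l]$ large, a contradiction for $p$ large. Two small corrections: first, $[l(T):l]$ equals $p^2-1$, $(p-1)^2$, or $p-1$ according to the splitting of $p$ in $K_E$ and the position of $T$, so it is large but carries no $p$-part --- your argument survives once ``divisible by a large power of $p$'' is replaced by ``at least $p-1$''; second, N\'eron--Ogg--Shafarevich does not rule out ramification of $l(T)/l$ at the bad primes of $E$, so the Hilbert class field must be replaced by a ray class field with conductor supported on those finitely many primes, which is still bounded independently of $p$.

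The paper instead exhibits ramification directly. Because $K_E\subset l$, applying a nonreal $\lambda\in\mathcal{O}$ to $T_1=\sigma(P)-P$ yields a $\mathbb{Z}$-independent $T_2=\lambda T_1\in E(l(P))[p]$, so all of $E[p]$ lies in $E(l(P))$. By Remark~\ref{notinj}, reduction modulo some $\wp\mid p$ kills one of $T_1,T_2$; after possibly replacing $\wp$ by a Galois conjugate one obtains $\sigma(P)\equiv P\pmod{\wp}$ with $\sigma\neq 1$, so $\sigma$ lies in the inertia group at $\wp$. This avoids any dependence of the threshold for $p$ on the class number $h_l$ (and hence on $k$), whereas your bound does depend on $h_l$; for the proposition as stated that is harmless, but the uniformity over varying $k_i$ is what is needed downstream. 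The paper's argument also proves the stronger fact that \emph{every} nontrivial $\sigma\in\Gal(l(P)/l(pP))$ lies in inertia above $p$, which is exactly what is reused in the Remark preceding, and the proof of, Theorem~\ref{ramification2}.
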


\begin{rmk}
\label{notinj}
Denoting the set of $p^r$-torsion points on $E$ by $E[p^r]$, note that $E[p^r]$ can not inject into $E(\bar{\mathbb{F}}_p)[p^r]$, since the former is isomorphic to $\mathbb{Z}/p^r\mathbb{Z}\times\mathbb{Z}/p^r\mathbb{Z}$, while the latter is either $\mathbb{Z}/p^r\mathbb{Z}$ or $0$.
\end{rmk}

\begin{proof}
Assume that $l(P)/l(p^P)$ is non-trivial, and let $\sigma$  in $\Gal(l(P)/l(pP))$ be a non-trivial element. Then by the Lemma \ref{lemma1},  $\sigma(P)-P$ is a non-trivial torsion point, let's call it $T_1$. Also by \cite[Section2, Chapter2]{Silverman2}, $\Aut _{( \mathcal{O}/p\mathcal{O} )}(E[p])=\left ( \mathcal{O}/p\mathcal{O} \right ) ^{*}$, and $\left ( \mathcal{O}/p\mathcal{O} \right)^{*}$ has at least $(p-1)^2$ elements. Given $T_1$ in $E(l(P))[p]$, we aim to get a point that is linearly independent from $T_1$. Note that multiplication by real integers keeps us in the family of points generated by  $T_1$, and any non-real integral element in $\mathcal{O}$ which is not zero modulo $p$ takes us out of the span of $T_1$ and thus finds a non-trivial torsion point out of that span. But there are only $p-1$ real elements in $\mathcal{O}$ modulo $p$, hence for primes greater than $2$ there will be at least one element $\lambda$ of the automorphism group of $p$-torsion points which will give an independent point by acting on $T_1$.  Observe that, for such $\lambda$, the point $[\lambda](x,y)$ is in $E(l(P))$, since multiplication by $\lambda$ is defined over $K_E$, which is contained in $l$. Hence for any point with coordinates in $l$, multiplying it with an element of the endomorhism ring, we get a point that has coordinates in the same field. By the surjectivity property of the image of Galois representation $\rho$ in Theorem \ref{serre} : for large enough primes, the image of $\rho$ in $\left( \mathcal{O}/p\mathcal{O} \right)^{*}$ is surjective, so there is a Galois element $\gamma$ in $\Gal(\bar{l}/l)$ that  corresponds to the automorphism element $\lambda$ such that $\rho(\gamma) T_1=\lambda (T_1)=T_2$. Hence $\gamma$ can be viewed as an element of $\Gal(l(P)/l)$.  But Remark~\ref{notinj} above says that one of these points will vanish modulo some prime $\wp$ above $p$ in $l(P)$. If $T_1$ is a point that vanishes mod $\wp$, then $\sigma$ turns out to be an element of the inertia group with respect to the prime number above $p$. If $T_2$ is the point that reduces to zero modulo $\wp$, then $\gamma^{-1}T_2\equiv 0 \bmod \gamma ^{-1} \wp$, but $\gamma^{-1}T_2= T_1$, so we have $\sigma(P)-P\equiv 0 \bmod \gamma ^{-1} \wp$. So $l(P)/l$ is a ramified field extension at a prime above $p$.
\end{proof}

\begin{thm}\label{thmM}
Let $E/\mathbb{Q}$ be an elliptic curve with CM, and $B'$ be the product of primes that either ramify in the complex multiplication field or are primes of bad reduction, together with some extra primes that  will be described in the proof. Given any imaginary field $k/\mathbb{Q}$, let $P$ be a point such that $k(P)/k$ is an abelian and unramified field extension. Then there is an integer $M=M(E)$ so that the odd part of $[k(P):k]$ divides $M[k(nP):k]$ for all $n\geq 1$ satisfying $\gcd(n,B')=1$.
\end{thm}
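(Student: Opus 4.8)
The plan is to reduce everything to the single assertion that $l(nP)=l(P)$, where $l=kK_E$. Granting this: since $k(nP)\subseteq k(P)\subseteq l(P)=l(nP)=l\cdot k(nP)$ and $l/k$ is Galois, the index $[k(P):k(nP)]$ divides $[l\cdot k(nP):k(nP)]=[l:l\cap k(nP)]$, which divides $[l:k]\le 2$. Hence $[k(P):k(nP)]\in\{1,2\}$, its odd part is $1$, and from $[k(P):k]=[k(P):k(nP)]\cdot[k(nP):k]$ we get that the odd part of $[k(P):k]$ equals the odd part of $[k(nP):k]$ and so divides $[k(nP):k]$. Thus $M=1$ works, and the whole content is in proving $l(nP)=l(P)$.

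Now I fix the excluded primes. Let $p_0=p_0(E)$ be a bound, depending only on $E$, beyond which the conclusion of Proposition~\ref{ramification} is available over every field $l=kK_E$ with $k$ imaginary quadratic, for every prime $p\ge p_0$ coprime to the primes of bad reduction of $E$. I take $B'$ to be the product of $2$, the primes of bad reduction, the primes ramifying in $K_E$, and all primes $\le p_0$; these last are the ``extra primes'' promised in the statement. First note that, since $k(P)/k$ is unramified and $l(P)=l\cdot k(P)$, the extension $l(P)/l$ is unramified at every prime. Now take $n$ with $\gcd(n,B')=1$ and write $n=q_1q_2\cdots q_t$ as a product of primes with repetition, each $q_j>p_0$. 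Set $Q_0=P$ and $Q_j=q_jQ_{j-1}$, so that $Q_t=nP$ and each $Q_j=(q_1\cdots q_j)P$ is an integer multiple of $P$; consequently $l\subseteq l(Q_t)\subseteq\cdots\subseteq l(Q_1)\subseteq l(Q_0)=l(P)$. For each $j$ the extension $l(Q_{j-1})/l(Q_j)=l(Q_{j-1})/l(q_jQ_{j-1})$ is squeezed between $l$ and $l(P)$, hence is abelian, Galois, and unramified at the primes above $q_j$; since $q_j>p_0$ and $q_j$ is prime to the bad primes, Proposition~\ref{ramification} forces this extension to be trivial. Chaining the resulting equalities gives $l(nP)=l(Q_t)=l(Q_0)=l(P)$, which is exactly what was needed.

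The one genuinely delicate point — and the place where the hypothesis that the CM is defined over the fixed field $K_E$ (because $E/\mathbb{Q}$) is essential — is the uniformity of $p_0$ over all imaginary quadratic $k$: one must know that the ``sufficiently large'' threshold in Proposition~\ref{ramification}, applied over $l=kK_E$, can be chosen depending only on $E$. I would argue this from Theorem~\ref{serre}: applied over $K_E$ it gives that the mod-$p$ image of $\Gal(\bar{\mathbb{Q}}/K_E)$ on $E[p]$ is all of $(\mathcal{O}/p\mathcal{O})^{*}$ for every $p$ above an $E$-only bound, and since $\Gal(\bar{\mathbb{Q}}/l)$ has index at most $2$ in $\Gal(\bar{\mathbb{Q}}/K_E)$, its image has index at most $2$ in $(\mathcal{O}/p\mathcal{O})^{*}$, which for $p$ not too small still contains an automorphism outside the subgroup of ``real'' scalars — precisely what the argument inside Proposition~\ref{ramification} uses. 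So $p_0$, and hence $M$, depends on $E$ alone. It is worth remarking that passing to odd parts throughout is exactly what lets us swallow the harmless factor of $2$ coming from $[l:k]\le 2$ and sidestep the prime $2$, for which no Serre-type surjectivity is available.
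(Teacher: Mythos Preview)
Your argument is correct and follows essentially the same route as the paper: pass to $l=kK_E$, use Proposition~\ref{ramification} (via the unramifiedness of $l(P)/l$) to force $l(P)=l(pP)$ for each prime $p\nmid B'$, chain over the prime factors of $n$ to get $l(P)=l(nP)$, and then descend to $k$ using that $[l:k]\le 2$ only affects the $2$-part. Your write-up is in fact tidier in two places---the chaining for composite $n$ and the uniformity of $p_0$ in $k$ are made explicit, and your observation that $l(P)/l$ is unramified \emph{everywhere} (since $l(P)=l\cdot k(P)$ with $k(P)/k$ unramified) is cleaner than the paper's more cautious handling of the primes ramifying in~$K_E$.
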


\begin{rmk}
The proof for non-CM elliptic curves can be found in \cite{RS}. The result there is stronger, since the statement is proved not only for $P$ such that $k(P)/k(nP)$ is unramified and abelian, but for any point on the elliptic curve that generates an abelian extension over $k$; see \cite[Theorem 11]{RS} for details. We are proving a weaker theorem for CM elliptic curves that will be enough for our purpose of extending the linear independence result for non-CM elliptic curves to CM elliptic curves.
\end{rmk}

\begin{proof}
Note that
$$[k(P):k]=[k(P):k(nP)]\cdot [k(nP):k].$$
To prove the theorem it is enough to find a bound $m$ for $[k(P):k(nP)]$, since then $M=m!$ will do the job.
Note that if $k(P)/k(nP)$ is a Galois extension, then $[k(P):k(nP)]$ is at most the number of $n$-torsion points in $E(k(P))$, as we showed in Lemma \ref{lemma1}. Given $P$ a point as defined above, the field $k(P)$ is an abelian extension of $k$, so $k(P)/k(nP)$ is a Galois extension when $P$ and $k$ are defined as in the statement of the theorem. Also, the fact $k(P)/k$ is an unramified extension can be used as a restriction when counting the number of $n$-torsion points in $E(k(P))$. Now let's replace the number field $k$ with $l$, which is defined to be $k K_E$, where $K_E$ is the complex multiplication field of $E$. Note that $l$ is a $2$-extension of $k$.  Also note that some primes that ramify in $K_E$ may ramify in the field extension $l(P)/l$. But there are only finitely many such primes, and we will avoid them by including them as factors of $B'$.
In Theorem \ref{ramification} we showed that if $l(P)/ l(pP)$ is non-trivial, then $l(P)/l$ is ramified at a prime above $p$ (assuming that $\gcd(p,B')=1$). $P$ is a point assumed to generate $k(P)$ such that $k(P)/k$ is unramifed. Hence since $l$ is the compositum of $k$ and $K_E$,  the extension $l(P)/l$ cannot ramify outside the primes ramifying in $K_E$. Therefore $l(P)/l(pP)$ can not be non-trivial if $p$ is a prime of good reduction,  not equal to any of the prime numbers that ramify in $K_E$, and does not violate the surjectivity condition in Theorem \ref {serre}.
We note that the odd parts of $[k(P):k(nP)]$  and $[l(P):l(nP)]$ are the same, since $l=k K_E$ and $K_E$ is a quadratic field. Under the assumption that $n$ is prime to $B'$, we can find a bound, say $m$ (which is $1$), for the latter, and then we get the same bound for the odd part of the former.
\end{proof}

Now we give the statement and the proof of a weaker version of our main result. This illustrates the main ideas in the argument. Throughout the proof we mention a few other needed results.

\begin{thm}\label{main1}
Let $E/\mathbb{Q}$ be an elliptic curve, and let
$$\Phi_{E}:X_0(N)\rightarrow E$$ be a modular parametrization.
Given the following:
\begin{itemize}
 \item $k_1,k_2, k_3,...k_r$ are distinct quadratic imaginary fields satisfying the Heegner condition for N,
 \item $h_1,h_2, h_3,...h_r$ are the class numbers of $k_1,k_2, k_3,...k_r$,
 \item $y_1,y_2, y_3,...y_r$ are points on the modular curve associated to the ring of integers of $k_1, k_2, k_3,...k_r$ respectively,
 \item $P_1,P_2, P_3,...P_r$ are the associated Heegner points $P_i=\Phi_{E}(y_i)$.
 \end{itemize}
 Let $B$ be the product of  all primes that are less than or equal to the squares of the prime divisors of the conductor, the primes that ramify in the extension $K_E/\mathbb{Q}$, and the primes excluded in Theorem~\ref{serre}.
There exists a constant $C=C(E,\Phi_{E})$ such that if the prime-to-$B$ part of the class numbers of $k_1,k_2, k_3,...k_r$ are larger than $C$,
then the points $P_1,P_2, P_3,...P_r$ are independent in $E(\bar{\mathbb{Q}})/E_{tors}.$
\end{thm}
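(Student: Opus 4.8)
The plan is to argue by contradiction, playing an upper bound against a lower bound for the degree over $\mathbb{Q}$ of the field generated by the Heegner points, following the strategy of \cite{RS}; the one genuinely new ingredient is Theorem~\ref{thmM}, which plays here the role that Serre's theorem on the image of Galois plays in the non-CM argument. Suppose $P_1,\dots,P_r$ were dependent in $E(\bar{\mathbb{Q}})/E_{tors}$. After relabelling there would be a nontrivial relation $\sum_{i=1}^{r}a_iP_i\in E_{tors}$ with $a_r\neq0$, and multiplying it by the order of this torsion point would give an exact relation $\sum_{i=1}^{r}b_iP_i=0$ with $b_r\neq0$, so that
\[
 b_rP_r=-\sum_{i=1}^{r-1}b_iP_i\in F(P_1,\dots,P_{r-1}),\qquad F:=k_1k_2\cdots k_r .
\]
Since $F$ is multiquadratic over $\mathbb{Q}$, both $[F:\mathbb{Q}]$ and every $[F:k_i]$ are powers of $2$; and $B$ — which by definition is divisible by $2$, by the bad primes, by the primes ramifying in $K_E/\mathbb{Q}$, by the primes excluded in Theorem~\ref{serre}, and indeed by all primes not exceeding the square of the largest prime dividing the conductor of $E$ — is large enough that the whole argument can be carried out with the prime-to-$B$ parts of degrees, into which the contributions of $F$, of the small primes, and of the integer $B'$ of Theorem~\ref{thmM} all disappear (enlarging $B$ if necessary so that $B'\mid B$).

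Next I would establish the lower bounds. By Corollary~\ref{corollary1}, $k_i(y_i)=K_{\mathcal{O}_{k_i}}$ (the Hilbert class field of $k_i$, since $\mathcal{O}_{k_i}$ is the maximal order), so $[k_i(y_i):k_i]=h_i$ and $k_i(P_i)\subseteq K_{\mathcal{O}_{k_i}}$; in particular $k_i(P_i)/k_i$ is abelian and unramified, which is exactly the hypothesis under which Theorem~\ref{thmM} applies to $P_i$. Proposition~\ref{proposition1} shows that $h_i$ divides $[k_i(P_i):k_i]\cdot(\deg\Phi_E)!$, so the prime-to-$B$ part of $[k_i(P_i):k_i]$ is at least the prime-to-$B$ part of $h_i$ divided by $(\deg\Phi_E)!$. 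Since $k_i(P_i)/k_i$ is Galois, so is $F(P_i)/F$, and $k_i(P_i)\cap F$ is a $2$-power extension of $k_i$ (it sits inside $F$), so the prime-to-$B$ parts of $[F(P_i):F]$ and of $[k_i(P_i):k_i]$ coincide. Finally — and here I would invoke \cite{RS} in bulk — the ring class fields of distinct quadratic imaginary fields are almost linearly disjoint over $F$: exploiting that $K_{\mathcal{O}_{k_i}}/\mathbb{Q}$ is unramified outside $\Disc(k_i)$, together with genus theory, one obtains a constant $c_0=c_0(E)$ with
\[
 \bigl(\text{prime-to-}B\text{ part of }[F(P_1,\dots,P_r):F]\bigr)\ \geq\ \frac{1}{c_0}\prod_{i=1}^{r}\bigl(\text{prime-to-}B\text{ part of }[k_i(P_i):k_i]\bigr).
\]

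For the upper bound I would show that, because of the relation, adjoining $P_r$ to $F(P_1,\dots,P_{r-1})$ only multiplies the prime-to-$B$ degree by a bounded factor. Write $b_r=b'b''$ with $b'$ the $B$-smooth part of $b_r$ and $\gcd(b'',B)=1$. Applying Theorem~\ref{thmM} over $k_r$ to $k_r(P_r)/k_r$ with $n=b''$ bounds the prime-to-$B$ part of $[k_r(P_r):k_r(b''P_r)]$ by a constant depending only on $E$ (in fact, by the proof of Theorem~\ref{thmM}, this index is a power of $2$); and Lemma~\ref{lemma1}, applied to the point $b''P_r$ and to multiplication by $b'$, bounds $[k_r(b''P_r):k_r(b_rP_r)]$ in terms of the $b'$-torsion of $E$ rational over an unramified abelian extension of $k_r$, whose effect on prime-to-$B$ parts is uniformly controlled because $b'$ is $B$-smooth (this last uniform control being another point to be imported from \cite{RS}). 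Hence the prime-to-$B$ part of $[k_r(P_r):k_r(b_rP_r)]$ is at most a constant $c_1=c_1(E)$, and since $b_rP_r\in F(P_1,\dots,P_{r-1})\supseteq k_r$ while $k_r(P_r)/k_r$ is abelian, the prime-to-$B$ part of $[F(P_1,\dots,P_r):F(P_1,\dots,P_{r-1})]$ is likewise at most $c_1$. Using the tower $F\subseteq F(P_1)\subseteq\cdots\subseteq F(P_1,\dots,P_{r-1})$ of Galois extensions, the prime-to-$B$ part of $[F(P_1,\dots,P_{r-1}):F]$ divides $\prod_{i=1}^{r-1}$ of the prime-to-$B$ parts of the $[k_i(P_i):k_i]$, so altogether the prime-to-$B$ part of $[F(P_1,\dots,P_r):F]$ is at most $c_1\prod_{i=1}^{r-1}\bigl(\text{prime-to-}B\text{ part of }[k_i(P_i):k_i]\bigr)$.

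Comparing this with the lower bound, the products $\prod_{i=1}^{r-1}$ cancel and one is left with: the prime-to-$B$ part of $[k_r(P_r):k_r]$ is at most $c_0c_1$. By the degree lower bound, the prime-to-$B$ part of $h_r$ is then at most $C:=c_0c_1(\deg\Phi_E)!$, a constant depending only on $E$ and $\Phi_E$, contradicting the hypothesis; hence $P_1,\dots,P_r$ are independent in $E(\bar{\mathbb{Q}})/E_{tors}$. The main obstacle is the bookkeeping with prime-to-$B$ parts: the delicate points are the uniform control, at the primes dividing $B$, of the torsion of $E$ over everywhere unramified abelian extensions of quadratic fields, and the near linear disjointness of ring class fields of distinct quadratic imaginary fields — one must check that the implied constants are uniform in $i$, in the auxiliary integers $b',b''$, and in the fields $k_i$. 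All of this is essentially carried out in \cite{RS}; the complex multiplication hypothesis enters only through the uniform bound on $[k_r(P_r):k_r(nP_r)]$ furnished by Theorem~\ref{thmM}.
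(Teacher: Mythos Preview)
Your argument is correct and uses the same ingredients as the paper's proof: the contradiction set-up, Corollary~\ref{corollary1} and Proposition~\ref{proposition1} for the lower bound, Theorem~\ref{thmM} for the prime-to-$B'$ part of $[k_r(P_r):k_r(b''P_r)]$, Lemma~\ref{lemma1} broken up prime-by-prime for the $B$-smooth part $b'$, and \cite[Proposition~18]{RS} for the disjointness of Hilbert class fields. The strategy is the same as the paper's.

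The one organisational difference is that you pass to the full composite $F(P_1,\dots,P_r)$ and compare a product lower bound with a product upper bound for $[F(P_1,\dots,P_r):F]$, cancelling $\prod_{i<r}$ on both sides. The paper stays entirely inside $k_r$: from $n_rP_r\in\prod_{i<r}K_i$ one gets $k_r(n_rP_r)\subset K_r\cap k_r\prod_{i<r}K_i$, and \cite[Proposition~18]{RS} says directly that this intersection is a $2$-power extension of $k_r$, so the odd part of $[k_r(n_rP_r):k_r]$ is $1$. Then the single divisibility
\[
h_r \,\bigm|\, (\deg\Phi_E)!\;[k_r(P_r):k_r(n_rP_r)]\;[k_r(n_rP_r):k_r]
\]
does all the work. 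This avoids your product lower bound entirely (no ``near linear disjointness'' constant $c_0$ is needed), and it makes the $B$-smooth step cleaner: each factor $[k_r(mp_k^{l}P_r):k_r(mp_k^{l+1}P_r)]$ divides $p_k^{2}!$, which is $B$-smooth by the very definition of $B$, so its prime-to-$B$ part is $1$ regardless of how many such factors there are. Your deferral of this last point to \cite{RS} is fine, but note that Lemma~\ref{lemma1} alone gives only $[k_r(b''P_r):k_r(b_rP_r)]\le|E(k_r(b''P_r))[b']|$, not divisibility, so one really does need the prime-power tower decomposition rather than a single application.
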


\begin{proof}
Let $k=k_1k_2k_3....k_r$, let $K$ be the Hilbert class field of $k$, and let $K_i$ be the Hilbert class field of $k_i$.
As in \cite{RS}, we will assume that $P_1,P_2, P_3,...P_r$ are dependent, and we will prove that this implies an upper bound for the prime-to-$B$ part of that $h_i$'s that depends only on $E$ and $\Phi_{E}$.
Thus we start by assuming that
$$n_1P_1+n_2P_2+n_3P_3+.....+n_rP_r=0$$ with not all $n_i$'s  zero.
Without loss of generality, we assume that $n_r\neq 0$. The dependence relation gives that
$$n_r P_r= -\sum_{i<r}{n_iP_i}\in E \left( \prod_{i<r}{K_i}\right),\text{ since each point }P_i\in E(K_i).$$
Thus,
$$k_r(n_r P_r)\subset K_r \cap k_r \prod_{i<r}{K_i}.$$
From this, we get that
$$[k_r(n_r P_r):k_r]\text{\quad divides\quad}[K_r \cap k_r \prod_{i<r}{K_i}:k_r].$$
We now use \cite[Proposition 18]{RS}, which says that
$$[K_r \cap k_r \Pi_{i<r}{K_i}:k_r]\text{ is a power of $2$}.$$
So the odd part of $[k_r(n_r P_r):k_r]$ must be $1$.
By Proposition \ref{proposition1} and Corollary \ref{corollary1} we have the following equation:
\begin{equation}\label{above}
[k_r(y_r):k_r]=[K_r:k_r]| (\deg \Phi_{E})![k_r(P_r):k_r]=(\deg \Phi_{E})![k_r(P_r):k_r(n_rP_r)][k_r(n_rP_r):k_r].
\end{equation}

Decomposing $n_r$ into factors, say $n_r=p_1^{l_1}p_2^{l_2}...p_t^{l_t}m_r$, where $m_r$ is prime to $B'$ (the product of primes of bad reduction and primes that ramify in $K_E$) and where $p_1, p_2,...p_t$ are prime factors of $B'$, we get
\begin{align*}
[k_r(P_r):k_r(n_rP_r)]=&[k_r(P_r):k_r(m_rP_r)][k_r(m_rP_r):k_r(m_rp_1^{l_1}P_r)][k_r(m_rp_1^{l_1}P_r):k_r(m_rp_1^{l_1}p_2^{l_2}P_r)]...\\
&...[k_r(m_rp_1^{l_1}...p_{t-1}^{l_{t-1}}P_r):k_r(m_rp_1^{l_1}...p_t^{l_t}P_r)].
\end{align*}
Note that for any $m$, each index $[k_r(m P_r):k_r(m p_k^{l_k}P_r)]$ is a product of  $[k_r(m {p_k}^{l}P_r):k_r(m {p_k}^{l+1}P_r)]$, where $l_k-1 \geq l\geq 0$. But each of these indexes is a divisor of $p_k^{2}!$ by Lemma \ref{lemma1}. Taking the prime-to-$B$ part of equation \eqref{above}, at the outmost right hand side we end up with some constant terms and the prime-to-$B$ part of the term $[k_r(P_r):k_r(m_rP_r)]$, which is $1$ by Theorem \ref{thmM}. Thus we get an upper bound for the prime-to-$B$ part of $h_r$ assuming dependency of the Heegner points associated to them. This implies that for a given set of $k_i$'s such that $h_i$'s have prime-to-$B$ parts greater than this number, the  associated Heegner points will be linearly independent. So we are done.
\end{proof}

Note that up to now we have avoided the primes of bad reduction, the primes of $K_E$ that ramify in the field extension $l(P)/l(pP)$, and some other primes that fail the surjectivity property in Theorem \ref{serre}. In the next section we explain how to handle these prime numbers and to improve Theorem \ref{main1} to get Theorem \ref{main2}.

\section{Improving Theorem \ref{main1}}
Let's start with the primes of bad reduction. Since $E/\mathbb{Q}$ is a CM elliptic curve, it has potential good reduction, so let $f$ be a number field over which $E$ has everywhere good reduction. We are interested in bounding $[k(P):k(pP)]$, where $P$ is a point on the elliptic curve that has coordinates in the Hilbert class field of $k$. For this purpose we examine $[lf(P):lf(pP)]$, where $l$ is the compositum of $k$ and the CM field of $E$, which we denote as $K_E$ . From now on we have no problem with primes of bad reduction, but only with the primes of $K_E f$ that ramify in $lf(P)$, and those that fail surjectivity in \ref{serre}.
\begin{thm}\label{ramification2}
Let $p$ be a prime of good reduction for a CM elliptic curve $E/F$, where $F$ is the compositum of three number fields: The $CM$ field $K_E$ of the elliptic curve, a quadratic imaginary field $k$, and a number field $f$ which is a field of good reduction for $E$. Let $P$ be the Heegner point on $E$ associated to the field $k$, or more generally, any point that gives an abelian extension $k(P)/k$. Then, $\left | \Gal(F(P)/F(p^rP) \right |$ is bounded by a constant $N$  that is independent of $r$ and $p$.
\end{thm}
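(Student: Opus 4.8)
The plan is to reduce this to the ramification analysis of Proposition~\ref{ramification} (now applied over the larger base field $F$), together with the CM form of Serre's theorem, Theorem~\ref{serre}, and then to turn the resulting "non-ramification" statement into a uniform bound via Lemma~\ref{lemma1}. First I would observe that over $F$ the curve $E$ has everywhere good reduction (since $f\subset F$), so Proposition~\ref{ramification} applies with $l$ replaced by $F$: for all sufficiently large primes $p$, if $F(P)/F(p^rP)$ were non-trivial then $F(P)/F$ would be ramified at a prime above $p$. But $P$ generates an abelian extension $k(P)/k$, and by hypothesis (in the Heegner case, via Corollary~\ref{corollary1}, since $k(P)=K_{\mathcal O}$ is the Hilbert class field) $k(P)/k$ is \emph{unramified}; passing up to $F=kK_Ef$ can only introduce ramification at the finitely many primes ramifying in $K_Ef/k$. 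Hence, away from that finite set of "bad" primes, $F(P)/F(p^rP)$ must be trivial.

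Next I would organise the $p$'s into three classes. For $p$ outside the finite bad set \emph{and} large enough that Theorem~\ref{serre} gives surjectivity of $\rho$ onto $(\mathcal O/p\mathcal O)^{*}$, the argument above shows $[F(P):F(p^rP)]=1$ for all $r$. There remain two finite collections of exceptional primes: the primes dividing the conductor of $\mathcal O$ / ramifying in $K_Ef$, and the finitely many small primes where $\rho$ fails to be surjective. For each such exceptional $p$, and for each $r$, Lemma~\ref{lemma1} bounds $[F(P):F(p^rP)]$ by $\bigl(\#E(F(P))[p^r]\bigr)!\le (p^{2r})!$ — but this is not yet independent of $r$. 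To get a bound independent of $r$ one argues as in \cite{RS}: the tower $F(P)\supset F(pP)\supset F(p^2P)\supset\cdots$ stabilises, because $F(P)/F$ is a fixed finite extension, so $[F(P):F(p^rP)]$ is eventually constant in $r$ and bounded by $[F(P):F]$; taking the maximum over the finitely many exceptional $p$ (and noting that for each fixed $p$ only finitely many $r$ matter before stabilisation) yields a single constant.

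I would then set $N$ to be the product (or maximum, as needed for the application) of these finitely many local contributions together with the trivial bound $1$ coming from the generic primes; since the set of exceptional primes depends only on $E$, $K_E$, $k$, and $f$ — and in fact $f$, $K_E$ can be taken to depend only on $E$ — the constant $N$ is independent of $r$ and $p$, as claimed. The main obstacle is the second class of exceptional primes: for $p$ where Serre surjectivity fails one cannot run the clean argument of Proposition~\ref{ramification}, so one must instead exploit that $F(P)/F$ is a fixed finite abelian extension to see that the $p$-power tower $F(p^rP)$ can descend only finitely far, giving an $r$-independent bound; making this quantitative (rather than merely finite) is the delicate point, and it is exactly where the structure of $\Gal(F(P)/F)$ as a quotient of a fixed ray/ring class group must be used.
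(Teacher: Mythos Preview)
Your proposal proves the literal statement, but in a way that both overshoots and undershoots. Overshoots: once you observe that $F\subset F(p^rP)\subset F(P)$ for every $r$, the inequality $[F(P):F(p^rP)]\le [F(P):F]$ already gives a bound independent of $r$ and $p$ with no case split, no Lemma~\ref{lemma1}, and no Serre. Undershoots: the bound $[F(P):F]$ is essentially the class number of $k$, so it is \emph{not} uniform in $k$; and it is exactly this uniformity in $k$ that the application to Theorem~\ref{main2} needs. You flag this yourself at the end (``making this quantitative \ldots\ is the delicate point'') but do not close the gap.

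The paper's argument is genuinely different and supplies precisely the missing uniformity. Rather than splitting into good and bad primes, it shows directly that \emph{every} nontrivial $\sigma\in\Gal(F(P)/F(p^rP))$ lies in the inertia subgroup at some prime above $p$ in $\Gal(F(P)/F)$; since the extension is abelian these inertia groups coincide, so the whole Galois group sits inside a single inertia group. Now the key point: $k(P)/k$ is unramified, so all ramification in $F(P)/F$ is inherited from $K_Ef/\mathbb{Q}$, whence the inertia degree at any prime is at most $[K_Ef:\mathbb{Q}]$. This bound depends only on $E$ (through $K_E$ and $f$), not on $k$, $P$, $r$, or $p$.

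Two technical points on your write-up. First, you invoke Proposition~\ref{ramification} for $F(P)/F(p^rP)$, but that proposition is stated for the exponent $1$ case $F(P)/F(pP)$; to run it at level $p^r$ in one shot you need the stronger input of Theorem~\ref{serre2} (surjectivity of $\rho$ onto $(\mathcal{O}/p^r\mathcal{O})^{*}$ for all $r$, not just onto $(\mathcal{O}/p\mathcal{O})^{*}$), which you do not cite. You could alternatively iterate Proposition~\ref{ramification} along the tower $F(P)\supset F(pP)\supset F(p^2P)\supset\cdots$, but you should say so. Second, your treatment of the finitely many exceptional primes never uses anything beyond the trivial bound $[F(P):F]$; the appeal to stabilisation of the tower and to ring/ray class structure is decoration on top of that triviality and does not produce the $k$-independent constant you say you need. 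The inertia bound via $[K_Ef:\mathbb{Q}]$ is the idea you are missing.
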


\begin{rmk}
For the case $r=1$, let $\sigma \in \Gal(F(P)/F(pP)$ be non-trivial. Then $\sigma$ is in the inertia group of a prime above $p$ from the proof of Theorem \ref{ramification}. But $F(P)/F(pP)$ is an abelian extension, since its Galois group injects into $\Gal(k(P)/k)$, which is an abelian group, so the inertia groups for primes above the same prime are identical. The size of the inertia group  gives a bound on the size of $\Gal(F(P)/F(pP)$, since each non-trivial element of the Galois group is an element of the inertia group of a prime above $p$, and these groups are identical. Therefore every non-trivial element of $\Gal(F(P)/F(pP)$ is in the inertia group of any fixed prime above $p$ in $F(P)$ . Hence the index $[F(P):F(pP)]$ can not exceed the degree of the inertia group, which is at most of order $[F:\mathbb{Q}]$. For general $r$, the proof is similar to the proof of Theorem \ref{ramification},  but first we need the following version of Theorem \ref{serre}, which is also a
  standard consequence of the theory of CM.
\end{rmk}

\begin{thm}\label{serre2}
Let $k$ be a number field, and let $E/k$ be an elliptic curve with complex multiplication by an order $\mathcal{O}$ of a quadratic imaginary field $K_E$.  For every $n\ge1$ we have a representation (as in Theorem \ref{serre})
$$\rho: \Gal(\bar{k}/k)\longrightarrow \Aut_{\mathcal{O}}(E[n]) \cong \left( \mathcal{O}/n\mathcal{O} \right)^{*}.$$
Then the image of~$\rho$ in $\left( \mathcal{O}/n\mathcal{O} \right)^{*}$ has index that is bounded independent of~$n$. In particular, for large enough prime numbers $p$,  the representation
$$\rho: \Gal(\bar{k}/k)\rightarrow \left( \mathcal{O}/p^r\mathcal{O} \right)^{*}.$$
is surjective for all $r\ge1$.
\end{thm}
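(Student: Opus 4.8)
The plan is to deduce this from the classical theory of complex multiplication, tracking how the torsion fields of $E$ sit inside the ray class fields of $K_E$; it is the prime-power refinement of the easy cases of Theorem~\ref{serre}. First I would reduce to the case $K_E\subseteq k$: if it does not hold, replace $k$ by the quadratic extension $l=kK_E$, over which the $\mathcal O$-module structure on each $E[n]$ is Galois-equivariant, so that $\rho$ is literally the representation in the statement, and over which any bound for the relevant index differs from one over $k$ by a factor of at most $[l:k]\le 2$. Note that $l$ then automatically contains the ring class field $K_{\mathcal O}=K_E(j(E))$ of $\mathcal O$, since $j(E)\in l$. From now on I argue over $l$.

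Because $\rho$ takes values in the abelian group $(\mathcal O/n\mathcal O)^{*}$, the extension $l(E_{\mathrm{tors}})/l$ is abelian, and letting $n$ vary the representations $\rho$ are compatible, so their limit is an injection $\Gal(l(E_{\mathrm{tors}})/l)\hookrightarrow\widehat{\mathcal O}^{*}=\prod_{\ell}(\mathcal O\otimes\mathbb Z_{\ell})^{*}$; let $G$ denote its image. The main theorem of complex multiplication (see \cite[Ch.~II]{Silverman2}) describes $G$ through the Artin reciprocity map: over $K_{\mathcal O}$ the field $K_{\mathcal O}(E[n])$ sits inside the ray class field of $K_E$ of modulus $n$, up to the bounded ambiguity coming from the Weber function, and reciprocity identifies the associated ray class group with $(\mathcal O_{K_E}/n\mathcal O_{K_E})^{*}$ modulo the image of the global units $\mathcal O_{K_E}^{*}$. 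Hence $[\widehat{\mathcal O}^{*}:G]$ is finite and bounded in terms of $|\mathcal O_{K_E}^{*}|\le 6$, the class number $h(\mathcal O)$, the degree $[l:K_{\mathcal O}]$, and a correction at the conductor of $\mathcal O$ --- none of which depends on $n$. Calling this bound $m$, the image $\rho(\Gal(\bar{l}/l))$ in $(\mathcal O/n\mathcal O)^{*}$, being the reduction of $G$ modulo $n$, has index at most $m$ for every $n$; this is the first assertion (over $l$, and so over $k$ after one more bounded factor).

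For the surjectivity claim, fix $p$ large: of good reduction, unramified in $K_E$, prime to the conductor of $\mathcal O$, and beyond the finitely many exceptions in Theorem~\ref{serre}. Then $(\mathcal O/p^{r}\mathcal O)^{*}$ is an extension of $(\mathcal O/p\mathcal O)^{*}$ by the pro-$p$ group $1+p\mathcal O/p^{r}\mathcal O$, which (for $p$ odd, unramified in $K_E$) is topologically generated by $1+p\mathcal O/p^{2}\mathcal O$; so it suffices to combine surjectivity modulo $p$ (Theorem~\ref{serre}) with surjectivity onto $1+p\mathcal O/p^{r}\mathcal O$ supplied by inertia at the primes above $p$. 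For the latter: at a prime $\mathfrak P\mid p$ of $l$ the formal group of $E$ is a Lubin--Tate formal module over $\mathcal O_{K_E}\otimes\mathbb Z_p$ --- acting through one of its factors when $p$ splits in $K_E$, through the whole ring when $p$ is inert --- so inertia at $\mathfrak P$ surjects onto the unit group of that factor. An inert $p$ then yields all of $1+p\mathcal O/p^{r}\mathcal O$ from one prime, while for a split $p$ the connected parts at the two primes of $l$ above $p$ correspond to the two complex-conjugate primes of $K_E$, so pairing them covers both factors of $1+p\mathcal O/p^{r}\mathcal O\cong(1+p\mathbb Z/p^{r}\mathbb Z)^{2}$. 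Hence $\rho$ surjects onto $(\mathcal O/p^{r}\mathcal O)^{*}$ for all $r\ge1$.

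I expect the second step to be the main obstacle: one must turn the main theorem of CM into a genuinely uniform bound $[\widehat{\mathcal O}^{*}:G]\le m$, keeping track of the contributions of the class group, the units $\mathcal O_{K_E}^{*}$, the Weber function, and the non-maximality of $\mathcal O$, and checking that no single one of them grows with $n$. Granting that, the surjectivity claim is comparatively soft, since for large $p$ all of these corrections are prime to $p$ while the wild inertia at $p$ --- read off from the CM (Lubin--Tate) formal group --- already supplies the full pro-$p$ part of $(\mathcal O/p^{r}\mathcal O)^{*}$.
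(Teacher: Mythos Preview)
The paper does not actually prove Theorem~\ref{serre2}: it is stated as ``a standard consequence of the theory of CM'' and then the text moves directly to the proof of Theorem~\ref{ramification2}. So there is no route in the paper to compare against; your proposal is supplying the standard argument that the paper simply invokes.

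Your overall strategy is the right one. Passing to $l=kK_E$ so that the $\mathcal O$-action is Galois-equivariant is exactly the reduction that makes the statement meaningful, and the main theorem of complex multiplication (torsion fields of $E$ sitting inside ray class fields of $K_E$, with the discrepancy controlled by $\mathcal O_{K_E}^{*}$, the class number, the conductor of $\mathcal O$, and $[l:K_{\mathcal O}]$) is precisely what gives a bound on $[\widehat{\mathcal O}^{*}:G]$ independent of $n$. That part is fine.

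For the surjectivity onto $(\mathcal O/p^{r}\mathcal O)^{*}$ for large $p$, your Lubin--Tate argument is heavier than necessary and has a loose end: when $l_{\mathfrak P}$ is strictly larger than $K_{E,\mathfrak p}$, inertia at $\mathfrak P$ need not map onto all of $\mathcal O_{K_{E,\mathfrak p}}^{*}$, only onto a norm subgroup, so the split/inert case analysis as written does not quite close. You do not need it. Once you have the uniform index bound $[(\mathcal O/p^{r}\mathcal O)^{*}:G_r]\le m$ and surjectivity modulo $p$ from Theorem~\ref{serre}, write $A=(\mathcal O/p^{r}\mathcal O)^{*}$ and $B=1+p\mathcal O/p^{r}\mathcal O$; then $G_rB=A$, so $[A:G_r]=[B:G_r\cap B]$ is a power of $p$ that is at most $m$, hence equals $1$ as soon as $p>m$. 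This is the ``comparatively soft'' observation you make in your final paragraph, and it already finishes the job without any local analysis.
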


\begin{proof}[Proof of Theroem \ref{ramification2}]
Assume that $F(P)/F(p^rP)$ is non-trivial, and let $\sigma$  in $\Gal(F(P)/F(p^rP))$ be a non-trivial element. Then $\sigma(P)-P$ is a non-trivial $p^r$ torsion point, let's call it $T_1$. Also by \cite[Section~2, Chapter~2]{Silverman2},
$$\Aut _{( \mathcal{O}/p^r\mathcal{O} )}(E[p^r])=\left ( \mathcal{O}/p^r\mathcal{O} \right )^{*},$$
and $\left ( \mathcal{O}/p^r\mathcal{O} \right)^{*}$ has more than $p^r-1$  elements. Given $T_1$ in $E(F(P))$, we aim to get a point that is linearly independent from $T_1$. Note that multiplication by real integers will keep us in the span of points generated by  $T_1$, and any non-real integral element in $\mathcal{O}$ which is not zero modulo $p^r$ will take us out of the span of $T_1$, and thus will give a non-trivial torsion point there. But there are only $p^r-1$ real elements modulo $p^r$, hence there will be an element $\lambda$ of the automorphism group of $p^r$-torsion points such that $\lambda(T_1)$ will be a point that is independent point of $T_1$.  Observe that, for such $\lambda$, the point $[\lambda](x,y)$ is in $E(F(P))$, since multiplication by $\lambda$ is defined over $K_E$, which is contained in $F$. Hence for any point with coordinates in $F$, multiplying it by an element of the endemorphism ring gives a point that has coordinates in the same field. We now use the surjectivity of the image of Galois representation $\rho$ in the theorem \ref{serre2}. For large enough prime powers, the image of $\rho$ in $\left( \mathcal{O}/p^r\mathcal{O} \right)^{*}$ is everything, so there is a Galois element $\gamma$ in $\Gal(\bar{F}/F)$ that  corresponds to the automorphism element $\lambda$, i.e., such that $\rho(\gamma) T_1=\lambda (T_1)=T_2$. Hence $\gamma$ may be thought as an element of $\Gal(F(P)/F)$.   But by an earlier remark, one of these points must vanish modulo some prime $\wp$ above $p$ in $F(P)$. If $T_1$ is a point that vanishes mod $\wp$, then
$\sigma$ is an element of the inertia group with respect to the prime $p$. If $T_2$ is the point that reduces to zero modulo $\wp$, then $\gamma^{-1}T_2\cong 0 \bmod \gamma ^{-1} \wp$. But $\gamma^{-1}T_2= T_1$, so we have $\sigma(P)-P=0 \bmod \gamma ^{-1} \wp$. Therefore every $\sigma$ is an element of the inertia group for a prime above $p$, and these groups are same since $F(P)/F(p^rP)$ is abelian. But the ramification degree of $F(P)/F$ is bounded by the degree of $K_Ef/\mathbb{Q}$, say $N$, since $k(P)/k$ is already an unramified field extension, and only the ramification degree can be attained from the ramification degree of $p$ in $K_E f$, which is at most the degree of this field. Hence  the degree of $F(P)/F(p^rP)$ is bounded by some constant $N'$, for all $r$ and $p$, assuming that $E/F$ has everywhere good reduction.
\end{proof}

So we can take care of primes of bad reduction, primes of ramification, and the finitely many primes that were excluded by Theorem \ref{serre} by extending the field $l$ to $lf$, where $f$ is a field of everywhere good reduction, and then using  Theorem \ref{serre2}. By doing this, we showed that for any prime  $p$ in $lf$,
$$[lf(P):lf(p^rP)]\leqq N'$$
and therefore $[k(P):k(p^rP)]$ is bounded, for all $r$, by a constant. This is true for each critical prime that appears as a factor of $B'$ in Theorem \ref{thmM}. But there are only finitely many such primes, and for all other primes $p$ except $2$,  we know that $[k(P):k(p^rP)]$ is $1$ by Theorem \ref{thmM}. So we get a finite bound $N$ for the odd part of $[k(P):k(nP)]$. For the convenience of the reader, we restate our main result (Theorem \ref{main2}) and then give the proof.

\begin{thm}
Let $E/\mathbb{Q}$ be an elliptic curve, and let
$$\Phi_{E}:X_0(N)\rightarrow E$$ be a modular parametrization.
Assume that we are given the following:
\begin{itemize}
 \item $k_1,k_2, k_3,...k_r$ are distinct quadratic imaginary fields satisfying the Heegner condition for N,
 \item $h_1,h_2, h_3,...h_r$ are the class numbers of $k_1,k_2, k_3,...k_r$,
 \item $y_1,y_2, y_3,...y_r$ are points on the modular curve associated to the ring of integers of $k_1, k_2, k_3,...k_r$ respectively,
 \item $P_1,P_2, P_3,...P_r$ are the associated Heegner points $P_i=\Phi_{E}(y_i)$.
 \end{itemize}
There exists a constant  $C=C(E,\Phi_{E})$ such that, if the odd part of the class numbers of $k_1,k_2, k_3,...k_r$ are larger than $C$,
then the points $P_1,P_2, P_3,...P_r$ are independent in $E(\bar{\mathbb{Q}})/E_{tors}.$
\end{thm}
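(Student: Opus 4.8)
The plan is to run the argument of Theorem \ref{main1} again, but to feed into it the uniform index bounds proved in the previous section, thereby deleting the auxiliary modulus $B$ that occurs there. As before, I argue by contradiction: suppose $P_1,\dots,P_r$ are dependent in $E(\bar{\mathbb{Q}})/E_{tors}$, say
$$n_1P_1+\dots+n_rP_r=0$$
with not all $n_i$ zero, and assume without loss of generality that $n_r\neq0$. Write $K_i$ for the Hilbert class field of $k_i$. By Corollary \ref{corollary1}, $k_i(y_i)=K_i$, so $y_i\in X_0(N)(K_i)$ and hence $P_i=\Phi_E(y_i)\in E(K_i)$; the relation therefore forces $n_rP_r\in E\bigl(\prod_{i<r}K_i\bigr)$, so that $k_r(n_rP_r)\subseteq K_r\cap k_r\prod_{i<r}K_i$. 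By \cite[Proposition 18]{RS} the latter field has $2$-power degree over $k_r$, so the odd part of $[k_r(n_rP_r):k_r]$ is $1$. On the other hand, Corollary \ref{corollary1} gives $[k_r(y_r):k_r]=[K_r:k_r]=h_r$, so Proposition \ref{proposition1}, together with the tower $k_r\subseteq k_r(n_rP_r)\subseteq k_r(P_r)$, yields
$$h_r=[K_r:k_r]\quad\text{divides}\quad(\deg\Phi_E)!\cdot[k_r(P_r):k_r(n_rP_r)]\cdot[k_r(n_rP_r):k_r].$$
Taking odd parts, the theorem reduces to bounding the odd part of $[k_r(P_r):k_r(n_rP_r)]$ by a constant that does not depend on $n_r$ or on $r$.

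For that bound I would first peel off the tame part of $n_r$. Let $B'$ be, as in Theorem \ref{thmM}, the product of the primes of bad reduction of $E$, the primes ramifying in $K_E$, and the finitely many primes excluded from Theorem \ref{serre}; write $n_r=n'n''$ with $n''$ prime to $B'$ and $n'$ supported on the prime divisors of $B'$. Since $k_r(P_r)/k_r$ is abelian and unramified — it is a subextension of $K_r/k_r$ — Theorem \ref{thmM} applied to the point $P_r$ bounds the odd part of $[k_r(P_r):k_r(n''P_r)]$ by a constant depending only on $E$ (indeed it is $1$). It remains to bound $[k_r(n''P_r):k_r(n_rP_r)]$. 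Factoring $n'=p_1^{e_1}\cdots p_t^{e_t}$, where $t$ is at most the number of prime divisors of $B'$ and so depends only on $E$, and inserting one prime power at a time, this index is a product of at most $t$ factors of the form $[k_r(QP_r):k_r(p_j^{e_j}\cdot QP_r)]$, and it is enough to bound each of them. Here the good-reduction device of the previous section enters: choose a number field $f$, Galois over $\mathbb{Q}$ and depending only on $E$, over which $E$ has everywhere good reduction, and set $F_r=k_rK_Ef$. Since $F_r/k_r$ is a fixed finite Galois extension with $[F_r:k_r]$ dividing $[K_Ef:\mathbb{Q}]$, and since every multiple of $P_r$ generates an abelian extension of $k_r$, a routine compositum computation shows that $[k_r(QP_r):k_r(p_j^{e_j}\cdot QP_r)]$ divides $[F_r(QP_r):F_r(p_j^{e_j}\cdot QP_r)]\cdot[F_r:k_r]$. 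Now $E/F_r$ has everywhere good reduction, $QP_r$ generates an abelian extension of $k_r$, and $p_j$ is a rational prime, so Theorem \ref{ramification2} — whose proof rests on the surjectivity of the CM Galois representation on $p^s$-torsion for all $s$, from Theorem \ref{serre2} — bounds $\bigl|\Gal\bigl(F_r(QP_r)/F_r(p_j^{e_j}\cdot QP_r)\bigr)\bigr|$ by a constant $N$ depending only on $[K_Ef:\mathbb{Q}]$, independently of $p_j$, of $e_j$, and of $Q$. Hence each factor is at most $N\cdot[F_r:k_r]$, so $[k_r(n''P_r):k_r(n_rP_r)]$, and in particular its odd part, divides $\bigl(N\cdot[K_Ef:\mathbb{Q}]\bigr)^{t}$, a constant depending only on $E$.

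Assembling the estimates, the odd part of $[k_r(P_r):k_r(n_rP_r)]$ is at most a constant $N_0=N_0(E)$. Substituting this, and the triviality of the odd part of $[k_r(n_rP_r):k_r]$, into the divisibility displayed above shows that the odd part of $h_r$ divides the odd part of $(\deg\Phi_E)!\cdot N_0$. Setting
$$C=C(E,\Phi_E):=\text{the odd part of }(\deg\Phi_E)!\cdot N_0,$$
which depends only on $E$ and $\Phi_E$, we conclude that if the odd parts of all the $h_i$ exceed $C$ then the dependence relation cannot hold, so $P_1,\dots,P_r$ are independent in $E(\bar{\mathbb{Q}})/E_{tors}$. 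I expect the main obstacle to be the uniform bound of the second paragraph: it is precisely the elimination of the primes of bad reduction, the ramified primes of $K_E$, and the finitely many primes missing from Theorem \ref{serre}, and it is exactly here that one is forced to leave $k_r$ for the field $F_r=k_rK_Ef$ and to use the all-prime-power CM statements (Theorems \ref{serre2} and \ref{ramification2}) in place of the single-prime version of Theorem \ref{serre}; granting that, the remainder is the same bookkeeping as in Theorem \ref{main1}.
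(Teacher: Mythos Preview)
Your proof is correct and follows essentially the same route as the paper: assume a dependence relation, use \cite[Proposition~18]{RS} to kill the odd part of $[k_r(n_rP_r):k_r]$, invoke Proposition~\ref{proposition1} and Corollary~\ref{corollary1} to obtain the divisibility $h_r\mid(\deg\Phi_E)!\,[k_r(P_r):k_r(n_rP_r)]\,[k_r(n_rP_r):k_r]$, and then bound the odd part of $[k_r(P_r):k_r(n_rP_r)]$ by treating the prime-to-$B'$ part of $n_r$ via Theorem~\ref{thmM} and the $B'$-part by passing to $F_r=k_rK_Ef$ and applying Theorem~\ref{ramification2}. Your write-up is in fact somewhat more explicit than the paper's in the bookkeeping (the factorisation $n_r=n'n''$ and the compositum estimate comparing degrees over $k_r$ and over $F_r$); the only quibble is that the compositum step yields an inequality rather than a divisibility, but since only a bound is needed this is harmless.
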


\begin{proof}
Let $k=k_1k_2k_3....k_r$, let $K$ be the Hilbert class field for $k$, and let $K_i$ the Hilbert class field of $k_i$. As in \cite{RS}, we assume that $P_1,P_2, P_3,...P_r$ are dependent, and we prove that this gives an upper bound for the odd part of $h_i$'s that only depends on $E$ and $\Phi_{E}$. Thus suppose that
$$n_1P_1+n_2P_2+n_3P_3+.....+n_rP_r=0$$ with not all $n_i$'s  equal to zero.
Without loss of generality, we may assume that $n_r \neq 0$. The dependence relation gives that
$$n_r P_r= -\sum_{i<r}{n_iP_i}\in E \left( \prod_{i<r}{K_i}\right),$$ since each point $P_i\in E(K_i)$.
Thus,
$$k_r(n_r P_r)\subset K_r \cap k_r \prod_{i<r}{K_i}.$$
From this, we get that
$[k_r(n_r P_r):k_r]$ divides $[K_r \cap k_r \prod_{i<r}{K_i}:k_r]$. Now \cite[Proposition 18]{RS} says that
$[K_r \cap k_r \Pi_{i<r}{K_i}:k_r]$ is a power of $2$, so the odd part of $[k_r(n_r P_r):k_r]$ must be $1$.
By the Proposition \ref{proposition1} and  Corollary \ref{corollary1}, we have
\begin{multline}\label{above2}
[K_r:k_r]| (\deg \Phi_{E})![k_r(P_r):k_r] \\
=(\deg \Phi_{E})![k_r(P_r):k_r(n_rP_r)][k_r(n_rP_r):k_r]\mid (\deg \Phi_{E})! M [k_r(n_rP_r):k_r].
\end{multline}

Also by theorem \ref{ramification2}, the odd part of $[k_r(P_r):k_r(n_rP_r)]$ is known to be bounded by a constant for all $k_r, n_r$. Taking the odd part of \eqref{above2}, we get an upper bound for the odd part of the class numbers of $k_r$'s. This implies that for a given set of $k_r$'s whose $h_r$'s have odd parts greater than a constant, say $C=C(E,\Phi_{E})$, the Heegner points associated to these points are linearly independent.
\end{proof}

\section{On the independence of Heegner points associated to orders of fixed conductor $c$ of quadratic imaginary fields}
We examine  CM elliptic curves and non-CM ones separately, since they have different proofs. We generalize each case separately to Heegner points associated to orders of fixed conductor quadratic imaginary fields.
\subsection{Multiples of points and abelian extensions}
Our goal is to understand the index $[k(P_y):k(nP_y)]$, when $P_y$ is a Heegner point on the elliptic curve $E$. Here~$P_y$ is the image of a point $y$ on the modular curve which has CM by an order of conductor $c$ of a quadratic imaginary field $k$, under a fixed modular parametrization $\Phi_E$. We want to find a bound for this index that is independent of $n$. We know by \cite[Theorem 6]{L1} that $k(y)$ is the ring class field, which sits in the ray class field of conductor $c$, and that this field is an abelian extension of $k$ which has ramification only at primes dividing $c$. We will state two different theorems, since the  proof changes depending on whether the elliptic curve has CM.
For non-CM elliptic curves, the Heegner points (and their multiples) associated to an order of a quadratic imaginary field will satisfy the assumptions of \cite[Theorem 11]{RS},  since $k(P_y)$ is in the ring class field which is an abelian extension of $k$. Therefore the same theorem gives a bound for  $[k(P_y):k(nP_y)]$.
\begin{thm}\label{multiplenonCM}
Let $E/ \mathbb{Q}$ be an elliptic curve without CM, and let $d \geq 1$. There is an integer $M=M(E/ \mathbb{Q}, d)$ so that for any number field $k/ \mathbb{Q}$ and $P\in E(\bar {\mathbb{Q}})$ satisfying
$$[k:\mathbb{Q}]\leq d \text{ and }k(P)/k \text { is abelian},$$
the following estimate is true:
$$[k(P):k] \text{ divides }M[k(nP):k]$$
\end{thm}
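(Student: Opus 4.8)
The plan is to reproduce the argument of \cite[Theorem 11]{RS}, checking only that the constant produced there may be taken to depend on $E/\mathbb{Q}$ and on the degree bound $d$, rather than on the individual field $k$. Since
$$[k(P):k]=[k(P):k(nP)]\cdot[k(nP):k],$$
it is enough to produce an integer $M=M(E/\mathbb{Q},d)$ such that $[k(P):k(nP)]$ divides $M$ for every $n\ge 1$ and every pair $(k,P)$ with $[k:\mathbb{Q}]\le d$ and $k(P)/k$ abelian.

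First I would record that $k(P)/k(nP)$ is Galois, being a subextension of the abelian extension $k(P)/k$; Lemma \ref{lemma1} then shows that $[k(P):k(nP)]$ divides $(\#E(k(P))[n])!$. Because $k(P)/k$ is abelian we have $k(P)\subseteq k^{\mathrm{ab}}$, so $E(k(P))[n]\subseteq E(k^{\mathrm{ab}})[n]$, and the whole question reduces to a uniform torsion bound: there is a constant $B=B(E/\mathbb{Q},d)$ with $\#E(k^{\mathrm{ab}})[n]\le B$ for all $n\ge 1$ and all number fields $k$ of degree at most $d$. Granting this, $M=B!$ does the job, since then $[k(P):k(nP)]$ divides $B!$ for every $n$.

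For the torsion bound I would invoke Serre's open image theorem. Let $G_k\subseteq \mathrm{GL}_2(\hat{\mathbb{Z}})$ be the image of $\Gal(\bar k/k)$ acting on the torsion of $E$. Serre's theorem says $G_{\mathbb{Q}}$ is open; since $\Gal(\bar{\mathbb{Q}}/k)$ has index $[k:\mathbb{Q}]\le d$ in $\Gal(\bar{\mathbb{Q}}/\mathbb{Q})$, its image $G_k$ has index at most $d$ in $G_{\mathbb{Q}}$, so $[\mathrm{GL}_2(\hat{\mathbb{Z}}):G_k]\le d\cdot[\mathrm{GL}_2(\hat{\mathbb{Z}}):G_{\mathbb{Q}}]=:N$, a quantity depending only on $E$ and $d$. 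An open subgroup of $\mathrm{GL}_2(\hat{\mathbb{Z}})$ of index at most $N$ contains a principal congruence subgroup $\Gamma(m)$ whose level $m$ is bounded in terms of $N$ alone. Since $\Gal(\bar k/k^{\mathrm{ab}})$ is the closure of the commutator subgroup of $\Gal(\bar k/k)$, its image in $\mathrm{GL}_2(\hat{\mathbb{Z}})$ contains the closure of $[\Gamma(m),\Gamma(m)]$, and the latter contains $\Gamma(m')\cap\mathrm{SL}_2(\hat{\mathbb{Z}})$ for some $m'$ bounded in terms of $m$. A point of $E[n]$ fixed by all of $\Gamma(m')\cap\mathrm{SL}_2(\hat{\mathbb{Z}})$ is killed by $m'$, hence lies in $E[\gcd(n,m')]$; therefore $E(k^{\mathrm{ab}})[n]=E[n]^{\Gal(\bar k/k^{\mathrm{ab}})}$ has order at most $(m')^2$, which we take to be $B$.

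The main obstacle is the last paragraph, and specifically its two quantitative group-theoretic ingredients: that an open subgroup of $\mathrm{GL}_2(\hat{\mathbb{Z}})$ of index at most $N$ contains a congruence subgroup of level bounded in terms of $N$, and that the commutator subgroup of $\Gamma(m)$ contains a congruence subgroup of $\mathrm{SL}_2(\hat{\mathbb{Z}})$ of level bounded in terms of $m$. Both are standard — the second rests on $\mathrm{SL}_2(\mathbb{Z}_p)$ being its own commutator subgroup for $p\ge 5$, with the primes $2,3$ and the determinant (cyclotomic) part of $\mathrm{GL}_2$ contributing only a bounded correction — but they are exactly where the uniformity over $k$, hence the dependence of $M$ on $d$, comes from, so I would isolate them as lemmas just as in \cite{RS}.
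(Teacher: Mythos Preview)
The paper does not prove this statement itself but simply defers to \cite{RS} (the citation to ``Proposition~6'' is almost certainly a slip for ``Theorem~11'', as the paragraph preceding the theorem indicates). Your sketch is a correct reconstruction of exactly that Rosen--Silverman argument---the cocycle injection of Lemma~\ref{lemma1} into $E(k^{\mathrm{ab}})[n]$, followed by Serre's open-image theorem and the two standard group-theoretic lemmas you isolate---so there is nothing to add or correct.
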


\begin{proof}
See \cite[Proposition 6]{RS}
\end{proof}

Now we consider the CM version of Theorem~\ref{multiplenonCM}.

\begin{thm}\label{multipleCM}
Let $E/\mathbb{Q}$ be an elliptic curve with CM. Given any imaginary field $k/\mathbb{Q}$ and a Heegner point $P$ associated to an order of conductor $c$ of $k$, there is an integer $M=M(E)$ so that the prime-to-$2$ part of $[k(P):k]$ divides $M[k(nP):k]$ for all $n\geq 1$.
\end{thm}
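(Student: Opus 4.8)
The plan is to mimic the proof of Theorem \ref{thmM}, but replacing the hypothesis ``$k(P)/k$ is unramified'' with the weaker ``$k(P)/k$ is abelian and ramified only at primes dividing the conductor $c$,'' which is exactly what \cite[Theorem 6]{L1} gives us for a Heegner point associated to an order of conductor $c$. As before, the factorization
$$[k(P):k]=[k(P):k(nP)]\cdot[k(nP):k]$$
shows that it suffices to produce a uniform bound $m$ for the prime-to-$2$ part of $[k(P):k(nP)]$, valid for all $n\ge1$, since then $M=m!$ (padded by the finitely many ``bad'' primes, as below) does the job. First I would pass from $k$ to $l=kK_E$, noting that $l/k$ is a $2$-extension, so the odd parts of $[k(P):k(nP)]$ and $[l(P):l(nP)]$ coincide; and then from $l$ to $lf$, where $f$ is a field of everywhere good reduction for $E$, which exists by potential good reduction. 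Over $lf$ we have CM defined over the base field, good reduction everywhere, and the full strength of Theorem \ref{serre2}.

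Next I would localize the problem at a single rational prime $p$ and handle $[lf(P):lf(p^rP)]$ for all $r$. For the primes $p$ that do not divide $c$, that do not ramify in $K_E f$, and that are large enough for Theorem \ref{serre2} to give surjectivity of $\rho$ onto $(\mathcal O/p^r\mathcal O)^{*}$, the argument of Proposition \ref{ramification} / Theorem \ref{ramification2} applies verbatim: a non-trivial $\sigma\in\Gal(lf(P)/lf(p^rP))$ produces a non-trivial $p^r$-torsion point $T_1=\sigma(P)-P\in E(lf(P))$, acting by a suitable non-real $\lambda\in(\mathcal O/p^r\mathcal O)^{*}$ (there is such a $\lambda$ since the real classes number only $p^r-1$ out of more than $p^r-1$ units, plus the realization via a Galois element $\gamma$ by surjectivity) yields $T_2=\lambda T_1$ independent from $T_1$, and Remark \ref{notinj} forces one of $T_1,T_2$ to reduce to zero modulo some $\wp\mid p$ in $lf(P)$; in either case $\sigma$ lies in the inertia group $I_\wp$. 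Since $lf(P)/lf$ is abelian (its Galois group injects into $\Gal(k(P)/k)$) the inertia subgroups at all primes above $p$ coincide, so the whole group $\Gal(lf(P)/lf(p^rP))$ is contained in a single $I_\wp$. But the ramification of $lf(P)/lf$ is bounded: $k(P)/k$ ramifies only above primes dividing $c$, $K_E f/\mathbb Q$ contributes at most $[K_E f:\mathbb Q]$, and $f/\mathbb Q$ itself is a fixed finite extension; so $|I_\wp|\le N$ for a constant $N=N(E,c)$ independent of $p$ and $r$. For the finitely many remaining ``bad'' primes $p$ (those dividing $c$, ramifying in $K_E f$, or excluded by Serre) I would invoke exactly this inertia bound as well — the argument that every $\sigma$ lies in $I_\wp$ only used good reduction and abelianness, both of which hold over $lf$ — so again $[lf(P):lf(p^rP)]\le N$ for all $r$. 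Assembling over the finitely many bad $p$ and using triviality for the good large $p$ gives a single bound $m$ for the odd part of $[lf(P):lf(nP)]$, hence for the odd part of $[k(P):k(nP)]$, and $M=m!$ (or $M=N!$ raised to the number of bad primes, times the finite correction) completes the proof.

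The main obstacle I anticipate is the bad-prime analysis: unlike in Theorem \ref{thmM}, where one could simply \emph{exclude} the primes dividing $B'$ and absorb them into $M$ by a crude $p^2!$-type divisibility from Lemma \ref{lemma1}, here we need a bound that is uniform in $r$ as well as in $n$, so a naive $p^{2r}!$ estimate is useless. The fix is the observation already isolated in the remark after Theorem \ref{ramification2} and in Theorem \ref{ramification2} itself — that the inertia bound $|I_\wp|\le[K_E f:\mathbb Q]$ is independent of $r$ — but one must be careful to check that this really does apply to the conductor-$c$ ramified primes $p\mid c$: the point is that the \emph{extra} ramification in $lf(P)/lf$ coming from $p\mid c$ is still bounded by the local degree at $p$, which is at most $[K_E f \cdot (\text{ray class field of conductor }c):\mathbb Q]$, a constant depending only on $E$ and $c$. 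Once that uniformity is secured the rest is bookkeeping.
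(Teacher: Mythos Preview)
Your proposal is correct and follows essentially the same approach as the paper: reduce to bounding the odd part of $[k(P):k(nP)]$, pass to $lf$ with $l=kK_E$ and $f$ a field of everywhere good reduction, and then invoke the inertia-bound argument of Theorem~\ref{ramification2}, using that $k(P)/k$ is abelian and ramified only at primes dividing $c$ so that the ramification in $lf(P)/lf$ is uniformly bounded. Your write-up is in fact more explicit than the paper's about why the bound is uniform in $r$ at the bad primes (including $p\mid c$), which is exactly the point the paper gestures at by referring back to Theorem~\ref{ramification2}; note only that the resulting constant depends on $c$ as well as on $E$, as the paper's later application in Theorem~\ref{genorder} (where $C=C(E,c,\Phi_E)$) confirms.
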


\begin{proof}
Note that to prove this theorem, it is enough to find a bound $m$ for the prime-to-$2$ part of $[k(P):k(nP)]$, and then set $M=m!$. (Even the prime-to-$2$ part of $m!$ will do the job.) For $P$ given as above, $k(P)/k$ is an unramified abelian extension everywhere except at the prime divisors of $c$. But we know how to handle these finitely many primes from the detailed discussion in the proof of Theorem \ref{ramification2}. Basically, we first extend our field to a field of everywhere good reduction, and then show there that the bound on the ramification degree gives a bound on the size of $\Gal(k(P)/k(p^rP))$.  We can easily find a bound for these groups by using the fact that each $k(P)$ is just some finite extension of the Hilbert class field, and this index can be bounded by a number $Z$ for all $k$ as long as the points $P$ are sitting in a fixed conductor field extension of their associated field.  And for  good primes (unramified and good reduction ones), we have Theorem \ref{ramification}, which tells us that the odd part of this index is $1$. Thus  we get a bound $N$ for the odd part of the index $[k(P):k(nP)]$ that works for all $n$ and $k$.
\end{proof}

Now we can give the proof of the generalization of the result in \cite{RS} and Theorem \ref{main2} to orders of fixed conductor.

\begin{thm}\label{genorder}
Let $E/\mathbb{Q}$ be an elliptic curve, and let
$$\Phi_{E}:X_0(N)\rightarrow E$$ be a modular parametrization.
Assume that we are given the following:
\begin{itemize}
 \item $k_1,k_2, k_3,...k_r$ are distinct quadratic imaginary fields such that the order of conductor $c$ satisfies the Heegner condition for $N$,
 \item $h^c_1,h^c_2, h^c_3,...h^c_r$ are the ring class numbers of of $k_1,k_2, k_3,...k_r$ for the order of conductor $c$,
 \item $y_1,y_2, y_3,...y_r$ are points on the modular curve associated to the orders of conductor $c$ in the rings of integers of $k_1, k_2, k_3,...k_r$,
 \item $P_1,P_2, P_3,...P_r$ are the associated Heegner points $P_i=\Phi_{E}(y_i)$.
\end{itemize}
There is a number $C=C(E,c,\Phi_{E})$ such that if the odd parts of the ring class numbers $h^c_1,h^c_2, h^c_3,...h^c_r$ are larger than $C$,
then the points $P_1,P_2, P_3,...P_r$ are independent in $E(\bar{\mathbb{Q}})/E_{tors}$.

\end{thm}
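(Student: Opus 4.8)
The plan is to run the proof of Theorem~\ref{main2} essentially verbatim, replacing Hilbert class fields by ring class fields of conductor $c$ throughout. Write $K_i^{c}$ for the ring class field of $k_i$ attached to the order $\mathcal{O}_{c,i}$ of conductor $c$. By \cite[Theorem~6]{L1} one has $k_i(y_i)=K_i^{c}$, this is an abelian extension of $k_i$ unramified outside the primes dividing $c$, and $[K_i^{c}:k_i]=h_i^{c}$; moreover $P_i=\Phi_E(y_i)\in E(K_i^{c})$ since $\Phi_E$ is defined over $\mathbb{Q}$. Assume the $P_i$ are dependent in $E(\bar{\mathbb{Q}})/E_{tors}$; clearing torsion we may write $n_1P_1+\dots+n_rP_r=0$ in $E(\bar{\mathbb{Q}})$ with not all $n_i$ zero, and after relabelling $n_r\neq0$. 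Then $n_rP_r=-\sum_{i<r}n_iP_i$ lies in $E\bigl(\prod_{i<r}K_i^{c}\bigr)$ as well as in $E(K_r^{c})$, so
$$k_r(n_rP_r)\subseteq K_r^{c}\cap k_r\prod_{i<r}K_i^{c},$$
and hence $[k_r(n_rP_r):k_r]$ divides $\bigl[K_r^{c}\cap k_r\prod_{i<r}K_i^{c}:k_r\bigr]$.

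The key auxiliary input is a conductor-$c$ version of \cite[Proposition~18]{RS}: the odd part of $\bigl[K_r^{c}\cap k_r\prod_{i<r}K_i^{c}:k_r\bigr]$ is bounded by a constant $c_0=c_0(c)$ depending only on~$c$. I would obtain this from the genus-theory argument of \cite{RS}, adapted as follows. Put $L=K_r^{c}\cap k_r\prod_{i<r}K_i^{c}$; then $L/k_r$ is abelian and $L\subseteq K_r^{c}$, so $\Gal(L/k_r)$ is a quotient of $\Pic(\mathcal{O}_{c,r})$, and complex conjugation acts on $\Pic(\mathcal{O}_{c,r})$ --- hence on this quotient --- by inversion. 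Writing $K_r$ for the Hilbert class field of $k_r$, the degree $[L:L\cap K_r]$ divides $[K_r^{c}:K_r]$, which by the ring class number formula divides $c\prod_{p\mid c}(p^{2}-1)$ and is therefore bounded in terms of $c$ alone; peeling this factor off reduces matters to the field $L\cap K_r$, which is unramified and abelian over $k_r$ and so --- by the genus-theory argument of \cite[Proposition~18]{RS} --- is a $2$-extension of $k_r$. Hence one may take $c_0$ to be the odd part of $c\prod_{p\mid c}(p^{2}-1)$, and the odd part of $[k_r(n_rP_r):k_r]$ divides $c_0$.

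To finish, apply Proposition~\ref{proposition1} to $\Psi=\Phi_E$, $s=y_r$, $t=P_r$: using $k_r(y_r)=K_r^{c}$ we get that $h_r^{c}=[K_r^{c}:k_r]$ divides
$$(\deg \Phi_E)!\,[k_r(P_r):k_r]=(\deg \Phi_E)!\,[k_r(P_r):k_r(n_rP_r)]\,[k_r(n_rP_r):k_r].$$
For the index $[k_r(P_r):k_r(n_rP_r)]$ I invoke Theorem~\ref{multipleCM} when $E$ has CM, and Theorem~\ref{multiplenonCM} with $d=2$ when it does not (legitimate since $[k_r:\mathbb{Q}]=2$ and $k_r(P_r)/k_r$ is abelian, being a subextension of $K_r^{c}/k_r$); either way there is an integer $M=M(E,c)$ such that the prime-to-$2$ part of $[k_r(P_r):k_r(n_rP_r)]$ divides $M$. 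Taking odd parts of the displayed divisibility and inserting the two bounds, the odd part of $h_r^{c}$ divides $(\deg \Phi_E)!\cdot M\cdot c_0$. Letting $C=C(E,c,\Phi_E)$ be this quantity, we conclude that if the odd parts of all the $h_i^{c}$ exceed $C$ then no such dependence relation can exist, so the $P_i$ are independent in $E(\bar{\mathbb{Q}})/E_{tors}$.

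I expect the genus-theory step --- making \cite[Proposition~18]{RS} uniform over ring class fields of conductor $c$ --- to be the main obstacle. The delicate point is that the compositum $k_r\prod_{i<r}K_i^{c}$ is ramified over $k_r$ at primes dividing the discriminants of all the $k_i$, while the intersection $L$, lying in $K_r^{c}$, is unramified over $k_r$ away from the primes dividing $c$; one must use this discrepancy, together with the action of complex conjugation on the relevant Picard groups, to annihilate the discriminant ramification and land inside (a bounded extension of) the genus field of $k_r$. The remaining ingredients are exactly those of the proof of Theorem~\ref{main2}, with $K_i$ systematically replaced by $K_i^{c}$.
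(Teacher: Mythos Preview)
Your proof is correct and follows essentially the same route as the paper: assume a dependence relation, trap $k_r(n_rP_r)$ inside the intersection $\tilde K_r^{c}\cap k_r\prod_{i<r}\tilde K_i^{c}$, bound the odd part of that index by a constant $Z=Z(c)$ via the ring class number formula together with the almost-disjointness result \cite[Proposition~18]{RS}, and then combine Proposition~\ref{proposition1} with Theorem~\ref{multiplenonCM}/\ref{multipleCM} to bound the odd part of $h_r^{c}$ by $(\deg\Phi_E)!\,M\,Z$. Your handling of the almost-disjointness step (peeling off $[L:L\cap K_r]$ and then invoking the dihedral/genus-theory argument) is in fact more explicit than the paper's, which simply asserts that one may ``replace each $K_i$ with $\tilde K_i^{c}$'' in \cite[Proposition~18]{RS}; your final caveat correctly identifies that this replacement requires re-running the argument using that ring class fields are still generalized dihedral over~$\mathbb{Q}$, rather than citing the proposition as a black box.
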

\begin{proof}
Let $k=k_1k_2k_3....k_r$,  let $K$ be the Hilbert class field for $k$, let $K_i$ be the Hilbert class field of $k_i$,  let $\tilde {K}_i^{c}$ be the ring class field associated to the order of conductor $c$ in $k_i$, and let $K_i^c$ be the ray class field of conductor $c$.
As in \cite{RS}, we will assume that $P_1,P_2, P_3,...P_r$ are dependent and  will use this to give an upper bound for the odd parts of the $h^c_i$'s that only depends on $E$ and $\Phi_{E}$ and $c$.
Assume that
$$n_1P_1+n_2P_2+n_3P_3+.....+n_rP_r=0$$ with not all $n_i$'s  equal to zero.
Without loss of generality, we may assume that $n_r\neq 0$. The linear dependence relation gives that
$$n_r P_r= -\sum_{i<r}{n_iP_i}\in E ( \prod_{i<r}{\tilde{K}_i^c}),$$ since each point $P_i\in E(\tilde{K}_i^c)$.
Thus we have,
$$k_r(n_r P_r)\subset \tilde{K}_r^c \cap k_r \prod_{i<r}{\tilde{K}_i^c}.$$
From this, we get that
\begin{equation}\label{divisibility}
[k_r(n_r P_r):k_r] \text{\quad divides\quad}[\tilde{K}_r^c \cap k_r \prod_{i<r}{\tilde{K}_i^c}:k_r].
\end{equation}
By \cite[Chapter8, Theorem 7]{L1}, the order of the ring class group of conductor $c$ of a number field $f$ differs from the class number of the field by the factor
$$\frac{c}{[\mathcal{O}_f^*:\mathcal{O}^*]}\prod_{p|c}\left(1-\left(\frac{f}{p}\right)\frac{1}{p} \right ),$$
where $(\frac{f}{p})$ takes value $1$ if $p$ splits completely and $-1$ if $p$ stays prime in $f$. One can bound this factor for all quadratic imaginary fields $k$ by a constant that depends only on $c$. Let's denote this constant by $Z$ for now.
Writing down all we have so far, we get:
\begin{align}\label{main3}
[\tilde {K}_r^c :k_r]\mid (\deg \Phi_{E})![k_r(P_r):k_r]\mid(\deg \Phi_{E})! M [k_r(n_rP_r):k_r].
\end{align}
Now we use\cite[Proposition 18]{RS}, which says that
$[K_r \cap k_r \prod_{i<r}{K_i}:k_r]$ is a power of $2$. Replacing each $K_i$ with $\tilde{K}_i^c$ in this expression, we obtain the right hand side of \eqref{divisibility}, hence we find that the prime-to-$2$ part of $[k_r(n_r P_r):k_r]$ must be less than $Z$. Taking the odd part of \eqref{main3}, we get an upper bound for the prime-to-$2$ part of the ring class numbers of conductor $c$  of the given quadratic imaginary fields. This bound is $(\deg\Phi_{E})! M Z.$
We conclude that if the odd part of the ring class numbers of the fields are greater than some constant $C(E,\Phi)$, then the Heegner points associated to the orders of conductor $c$ of these fields are linearly independent.
\end{proof}

\begin{rmk}
A natural question to ask would be whether we can drop the fixed conductor constraint and prove independence of Heegner points coming from orders of arbitrary conductors of quadratic imaginary fields. Although we are not going to give the proof in detail, it is not difficult to prove the following theorem.
\end{rmk}

\begin{thm}
Given an elliptic curve $E$,  fixed modular parametrization $\Phi_{E}:X_0(N)\rightarrow E$, quadratic imaginary fields $k_1,k_2, k_3, ... k_r$ , and orders of conductors of respectively $c_1,c_2,c_3,... c_r$ of these fields, there exists a lower bound $C=C(E,c,\Phi_{E})$, where $c$ is the least common multiple of $c_1,c_2,c_3, ... c_r$, such that if the odd parts of the class numbers of these fields are greater than $C$, then the Heegner points associated to the given orders of the fields are independent.
\end{thm}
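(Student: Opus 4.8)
The plan is to deduce the theorem from the fixed-conductor case, Theorem \ref{genorder}, by passing to the common conductor $c=\operatorname{lcm}(c_1,\dots,c_r)$. First I would record the two containments that make this work: since each $c_i\mid c$, the ring class field $\tilde K_i^{c_i}$ of the order of conductor $c_i$ in $k_i$ is contained in the ring class field $\tilde K_i^{c}$ of conductor $c$, which in turn lies in the ray class field of $k_i$ of conductor $c$. By \cite[Theorem 6]{L1} the Heegner point $P_i$ has coordinates in $\tilde K_i^{c_i}$, so $k_i(P_i)/k_i$ is abelian and unramified outside the primes dividing $c$. Consequently the argument of Theorem \ref{multipleCM} (when $E$ has CM) or of Theorem \ref{multiplenonCM} with $d=2$ (when $E$ does not) applies uniformly to all of the $P_i$: it produces constants $M=M(E,c)$ and $N=N(E,c)$, depending only on $E$ and $c$, such that the odd part of $[k_i(P_i):k_i(nP_i)]$ is at most $N$ for every $i$ and every $n\ge1$. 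This is where the detailed discussion from the proof of Theorem \ref{ramification2} is invoked to dispose of the primes dividing $c$, the primes of bad reduction, and the primes excluded by Theorem \ref{serre}.

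Next I would run the proof of Theorem \ref{genorder} essentially verbatim, with $\tilde K_i^{c}$ in place of the Hilbert class fields $K_i$. Assume, for contradiction, a nontrivial relation $\sum_{i} n_iP_i=0$ with $n_r\ne0$. Then $n_rP_r=-\sum_{i<r}n_iP_i$ has coordinates in $\prod_{i<r}\tilde K_i^{c_i}$, so
$$k_r(n_rP_r)\subseteq \tilde K_r^{c_r}\cap k_r\prod_{i<r}\tilde K_i^{c_i}\subseteq \tilde K_r^{c}\cap k_r\prod_{i<r}\tilde K_i^{c}.$$
By the computation in the proof of Theorem \ref{genorder} --- namely \cite[Proposition 18]{RS} together with the bound of \cite[Chapter 8, Theorem 7]{L1} for $[\tilde K_i^{c}:K_i]$ in terms of $c$ --- the right-hand field has degree over $k_r$ whose odd part is at most some $Z=Z(c)$. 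On the other hand, Proposition \ref{proposition1} combined with $k_r(y_r)=\tilde K_r^{c_r}\supseteq K_r$ (again \cite[Theorem 6]{L1}) gives
$$h_r=[K_r:k_r]\mid [\tilde K_r^{c_r}:k_r]\mid (\deg\Phi_E)!\,[k_r(P_r):k_r],$$
and $[k_r(P_r):k_r]=[k_r(P_r):k_r(n_rP_r)]\,[k_r(n_rP_r):k_r]$. Taking odd parts and feeding in the two bounds above, the odd part of $h_r$ is at most $(\deg\Phi_E)!\cdot N(E,c)\cdot Z(c)=:C(E,c,\Phi_E)$. Hence if the odd parts of all the $h_i$ exceed $C(E,c,\Phi_E)$, no such dependence relation can exist, and the points are independent in $E(\bar{\mathbb{Q}})/E_{tors}$.

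I expect the only genuine point requiring care --- the main obstacle --- to be verifying that every constant above depends on $c$ alone and not on the individual conductors $c_i$. For the field-index factor $[\tilde K_i^{c}:K_i]$ and for the constant $Z(c)$ this is immediate, since $c_i\mid c$ and the ring-class-number formula of \cite[Chapter 8, Theorem 7]{L1} is bounded purely in terms of $c$. For the bound $N(E,c)$ on the odd part of $[k_i(P_i):k_i(nP_i)]$ it follows from the proof of Theorem \ref{ramification2}: after enlarging the base field once and for all to a compositum with a field of everywhere good reduction of $E$, the only primes that can contribute are those dividing $c$ together with the primes ramifying in $K_E$ and the finitely many primes excluded by Theorem \ref{serre}, and the contribution of each is controlled by a ramification degree depending only on $[K_Ef:\mathbb{Q}]$ and $c$. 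Everything else is the bookkeeping already carried out in the proofs of Theorems \ref{multipleCM}, \ref{multiplenonCM} and \ref{genorder}.
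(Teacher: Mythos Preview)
Your proposal is correct and follows essentially the same approach as the paper: replace each ring class field $\tilde K_i^{c_i}$ by the larger ring class field $\tilde K_i^{c}$ of the common conductor $c=\operatorname{lcm}(c_1,\dots,c_r)$, then rerun the argument of Theorem~\ref{genorder} verbatim, using that the index $[\tilde K_i^{c}:K_i]$ is bounded in terms of $c$ alone. The paper's own justification is only a one-paragraph sketch indicating exactly this substitution, so your write-up is in fact more detailed than what appears there.
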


The proof is similar to the one above for Theorem \ref{genorder} except that we replace the ring class field of conductor $c_i$ for the field $k_i$ with the ring ring class field of conductor $c=\text{lcm}\left(c_1,c_2, ...c_r \right)$. This is a larger field, and we can find bound on the index of it over the Hilbert class fields for all $1\leq\ i \leq r$, so that the rest of the proof is similar.

Theorem~\ref{genorder} is really a generalization of Theorem~\ref{main2} and a  result of \cite{RS}. Theorem~\ref{main2} and the result of Silverman and Rosen deal with the case where $c=1$. Note that when Heegner points are associated to the maximal orders of given quadratic imaginary fields, we have $Z=1$ and we get exactly the result as in \cite {RS} and Theorem~\ref{main2}.

\section{On the independence of Heegner points associated to real quadratic fields}

The article \cite{Darmon} by Darmon is a good reference for different types of modular parametrizations of elliptic curves. These parametrizations can be used to generate algebraic points on elliptic curves. In \cite{Darmon2} it is  explained, for an  elliptic curve $E$ over $\mathbb{Q}$, how to conjecturally assign Hegneer points to a real quadratic field $k$ for which $\sign(E,k)=-1$. Under the assumption that $\sign(E,k)=-1$, by \cite[Chapter 3 Theorem 3.17]{Darmon} there is a prime divisor $p$ of the conductor $N$ of the elliptic curve $E$ that is either inert or ramified in $k/\mathbb{Q}$. This prime is used to construct a $p$-adic uniformization. Under this parametrization, the image of the CM point is a $p$-adic point on the elliptic curve. However, it's conjectured by Darmon that this point is algebraic. Moreover he conjectures that it generates the Hilbert class field of $k$, in the strict sense over $k$.
Under the assumption that the points associated to a real quadratic field are algebraic and that the coordinates of each of them generate the Hilbert class field in the strict sense over the base field $k$, we will examine their independence. We will look separately at elliptic curves with CM and without CM.
\subsection{On non-CM elliptic curves}
One can find a bound for $[k(P): k(nP)]$ for all positive integers $n$ and number fields $k$ as long as $k(P)/k$ is abelian. To show this, Serre's theorem on the image of Galois and the linear algebra estimate in \cite{RS} are used together. The other crucial step in the proof is the strong disjointedness property. We need to compute $[K^{+}_r \cap k_r \prod_{i<r}K^{+}_{i}:k_r]$, where $K^{+}_i$ is the Hilbert class field in the strict sense of the field $k_i$ for $1\leq i \leq r$. But this number is a  power of two times the index $[K_r \cap k_r \prod_{i<r}{K_i}:k_r]$,  where $K_i$ denotes the Hilbert class field of $k_i$. Hence the odd part of $[K^{+}_r \cap k_r \prod_{i<r}K^{+}_{i}:k_r]$ is also $1$.
\subsection{On CM eliptic curves}
The proof for the CM case differs from the non-CM case at the estimation of $[k(P): k(nP)]$. In the CM case we will be using a ramification argument. This time we will be in a (potentially) larger field than the Hilbert class field, but it will not give any extra ramification at the finite places, and the same arguments will work for this case, too. So we can still use our ramification argument to say that $[k(P):k(nP)]$ is bounded for all $n$ and $k$. The rest follows similarly to the proof of Theorem~\ref{main2}.

\begin{thm}\label{gen2}
Let $E/\mathbb{Q}$ be an elliptic curve.
Assume that we are given the following:
\begin{itemize}
\item $k_1,k_2, k_3,...k_r$ are distinct quadratic real fields,
\item $h_1,h_2, h_3,...h_r$ are the  class numbers of the fields $k_1,k_2, k_3,...k_r$,
\item $P_1,P_2, P_3,...P_r$ points on the elliptic curve associated to the ring of integers of $k_1, k_2, k_3,...k_r$, respectively, according to the Heegner point construction mentioned above.
\end{itemize}
Assume that Darmon's conjectures on the properties of real Heegner points are true. Then there is a number $C$, such that if the odd parts of the numbers $h_1,h_2, h_3,...h_r$ are larger than $C$,
then the points $P_1,P_2, P_3,...P_r$ are independent in $E(\bar{\mathbb{Q}})/E_{tors}$.
\end{thm}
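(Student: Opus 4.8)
The plan is to run the argument of Theorem~\ref{main2} essentially verbatim, replacing the (wide) Hilbert class fields by their narrow counterparts and branching once according to whether $E$ has CM. Suppose the conclusion fails, so that $n_1P_1+\dots+n_rP_r=0$ with not all $n_i$ zero; after renumbering assume $n_r\neq 0$. Write $k=k_1\cdots k_r$, let $K_i^{+}$ be the Hilbert class field of $k_i$ in the strict sense, and recall that, under Darmon's conjectures, $k_i(P_i)=K_i^{+}$. The relation $n_rP_r=-\sum_{i<r}n_iP_i$ then lands in $E\bigl(\prod_{i<r}K_i^{+}\bigr)$, so $k_r(n_rP_r)\subseteq K_r^{+}\cap k_r\prod_{i<r}K_i^{+}$, and hence $[k_r(n_rP_r):k_r]$ divides $\bigl[K_r^{+}\cap k_r\prod_{i<r}K_i^{+}:k_r\bigr]$. (If one only knew that $k_i(P_i)$ has index bounded independently of $i$ in $K_i^{+}$, a Proposition~\ref{proposition1}-type argument would absorb the discrepancy; but Darmon's conjecture gives equality.)

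First I would establish the strong-disjointness input in the narrow setting: $\bigl[K_r^{+}\cap k_r\prod_{i<r}K_i^{+}:k_r\bigr]$ is a power of $2$ times $\bigl[K_r\cap k_r\prod_{i<r}K_i:k_r\bigr]$, the latter being a power of $2$ by \cite[Proposition~18]{RS}. This is a genus-theory computation: the narrow class field differs from the wide one only by an extension of $2$-power degree, so passing to the strict sense perturbs the entire intersection picture only at the prime $2$. Consequently the odd part of $[k_r(n_rP_r):k_r]$ is $1$.

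Next I would bound the odd part of $[k_r(P_r):k_r(n_rP_r)]$ by a constant depending only on $E$, uniformly in $n_r$ and in $k_r$. In the non-CM case this is the narrow-field analogue of Theorem~\ref{multiplenonCM}: the extension $k_r(P_r)/k_r$ is abelian with $[k_r:\mathbb{Q}]=2$, so Serre's open-image theorem together with the linear-algebra estimate of \cite{RS} applies directly. In the CM case I would invoke the ramification argument of Theorems~\ref{ramification} and~\ref{ramification2}; the point to check is that $K_i^{+}/k_i$, although possibly strictly larger than the Hilbert class field, is still unramified at every \emph{finite} prime (the narrow class field ramifies only at the archimedean places), so after passing to the compositum $F$ of $k_i$, $K_E$ and a field $f$ of everywhere good reduction no new finite ramification is introduced, and the bound $|\Gal(F(P)/F(p^rP))|\le [K_Ef:\mathbb{Q}]$ proved there, combined with Theorem~\ref{serre2} for the finitely many critical primes, yields the required uniform bound on the odd part.

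Finally, combining the pieces with $[K_r^{+}:k_r]=[k_r(P_r):k_r]=[k_r(P_r):k_r(n_rP_r)]\cdot[k_r(n_rP_r):k_r]$ and passing to odd parts bounds the odd part of $[K_r^{+}:k_r]$, hence the odd part of $h_r$ (which equals the odd part of the narrow class number), by a constant $C=C(E)$; this contradicts the hypothesis as soon as the odd parts of the $h_i$ exceed $C$. The main obstacle I expect is the CM ramification step: one must argue carefully that replacing the Hilbert class field by the narrow one — and, in Darmon's picture, by whatever field the conjectural point actually generates — introduces no extra \emph{finite} ramification and leaves the torsion-counting and Serre-surjectivity inputs of Theorems~\ref{ramification2} and~\ref{serre2} intact. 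A structural caveat, rather than an obstacle to the proof, is that the result is only as unconditional as Darmon's algebraicity and field-of-definition conjectures for these real-quadratic Heegner points.
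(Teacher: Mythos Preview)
Your proposal is correct and follows essentially the same route as the paper's proof: assume a dependence relation, trap $k_r(n_rP_r)$ inside $K_r^{+}\cap k_r\prod_{i<r}K_i^{+}$, reduce the narrow-class-field disjointness to the wide one via the $2$-power discrepancy (so the odd part of $[k_r(n_rP_r):k_r]$ is $1$), and then bound the odd part of $[k_r(P_r):k_r(n_rP_r)]$ uniformly---in the non-CM case by \cite[Theorem~11]{RS} and in the CM case by the ramification argument of Theorems~\ref{ramification} and~\ref{ramification2}, using that $K_i^{+}/k_i$ introduces no \emph{finite} ramification. Your write-up is in fact more explicit than the paper's on the CM/non-CM branching and on why the narrow setting does not disturb the ramification argument.
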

\begin{proof}
Let $k=k_1k_2k_3....k_r$, and let $K$ and  $K^{+}$ be respectively the Hilbert class field and the Hilbert class field in the strict sense of $k$. Similarly, let $K_i$ and $K_i^{+}$ be respectively the Hilbert class field and the Hilbert class field in the strict sense of $k_i$ for $1 \leq i \leq r$.
We assume $P_1,P_2, P_3,...P_r$ are dependent and prove an upper bound for the odd parts of $h_i$'s that only depends on $E$.
Say
$$n_1P_1+n_2P_2+n_3P_3+.....+n_rP_r=0$$ with not all $n_i$'s  equal to zero.
Without loss of generality, we may assume that $n_r\neq 0$. The linear dependence relation gives that
$$n_r P_r= -\sum_{i<r}{n_iP_i}\in E ( \prod_{i<r}{K}_i^{+}),$$ sinnce each point $P_i\in E({K}^{+}_i).$
Thus we have,
$$k_r(n_r P_r)\subset {K}_r^{+} \cap k_r \prod_{i<r}{{K}_i^{+}}.$$
and hence
\begin{equation}\label{divisibilityr}
[k_r(n_r P_r):k_r] \text{\quad divides\quad}[{K}_r^{+} \cap k_r \prod_{i<r}{{K}_i^{+}}:k_r].
\end{equation}
Remember that $[K_r \cap k_r\prod_{i<r}{K_i}:k_r]$ is a power of $2$, so the same is true of the right hand side of the relation in \eqref{divisibilityr}.
On the other hand,
\begin{align}\label{shim}
[{K}_r^{+} :k_r]=[k_r(P_r):k_r]\mid M [k_r(n_rP_r):k_r].
\end{align}
Taking the odd part of \eqref{shim} and taking in account equation \eqref{divisibilityr}, we get an upper bound for the prime-to-$2$ part of the class numbers of  of the given quadratic real fields. This bound is $M$. We conclude that for any set of distinct real quadratic fields, if the class numbers of these fields are greater than some constant number, then the Heegner points associated them will be independent.
\end{proof}

\end{document}